\theoremstyle{definition}
\newtheorem{defn}{Definition}[section]
\newtheorem{example}[defn]{Example}
\newtheorem{rem}[defn]{Remark}
\theoremstyle{plain}
\newtheorem{thm}[defn]{Theorem}
\newtheorem{lem}[defn]{Lemma}
\newtheorem{cor}[defn]{Corollary}
\newtheorem{question}[defn]{Question}
\title[]{Flat plumbing basket, self-linking number and Thurston-Bennequin number}
\author{Keiji Tagami}
\subjclass[2010]{57M25, 57R17}
\address{
Department of Mathematics, Faculty of Science and Technology, Tokyo University
of Science, Noda, Chiba, 278-8510, Japan
}
\email{tagami\_keiji@ma.noda.tus.ac.jp}
\date{\today}
\begin{document}
\begin{abstract}
A flat plumbing basket is a surface consisting a disk and finitely many bands which are contained in distinct pages of the trivial open book decomposition of $\mathbf{S}^{3}$. 
In this paper, we construct a Legendrian link from a flat plumbing basket, and we describe a relation among the self-linking number, the Thurston-Bennequin number and the flat plumbing basket number of the link. 
As a corollary, we determine the flat plumbing basket numbers of torus links. 
\end{abstract}
\maketitle
\noindent
{\bf Comments: }
This is a previous version of \cite{Ito-Tagami}. 
In \cite{Ito-Tagami}, we correct and generalize the construction of the Legendrian link $\mathcal{L}_{F}$ associated with a flat plumbing basket $F$ given in Section~\ref{sec:legendrian-fpb}. 
Moreover, we expand and improve the results in this paper. 
We also give a front projection of $\mathcal{L}_F$, which answers Question~\ref{ques:front}. 
\section{Introduction}
It is well known that for any oriented link $L$ in the $3$-sphere $\mathbf{S}^3$, there is some oriented compact surface whose boundary is the link $L$. 
Such a surface is called a Seifert surface of $L$. 
Furihata-Hirasawa-Kobayashi \cite{FHK} introduced a concept of positions of a Seifert surface, which is called  a ``flat plumbing basket". 
\par
A Seifert surface is a {\it flat plumbing basket} if it is obtained from a disk by plumbing some unknotted and untwisted annuli so that the gluing regions are in the disk. 
Flat plumbing baskets can be expressed in terms of the trivial open book decomposition $\mathcal{O}$ of $\mathbf{S}^3$. 
Namely, a flat plumbing basket consists a page $D_{0}$ of $\mathcal{O}$ and finitely many bands which are contained in distinct pages of $\mathcal{O}$. 
\par
We say that an oriented link $L$ admits a flat plumbing basket presentation $F$ if $F$ is a flat plumbing basket and its boundary is $L$. 
\begin{thm}[\cite{FHK}]
Any oriented link admits a flat plumbing basket presentation.
\end{thm}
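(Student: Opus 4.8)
The plan is to exhibit, after an isotopy of $L$, a Seifert surface of $L$ having the shape demanded by the definition: a single disk $D_{0}$, identified with a page of $\mathcal{O}$, together with finitely many bands that are \emph{flat} (unknotted and untwisted) and that lie in pairwise distinct pages of $\mathcal{O}$. There are two natural routes. The first is an induction on the crossing number of a connected diagram $D$ of $L$: Seifert-smoothing one crossing of $D$ yields a diagram with one fewer crossing, representing a link $L'$; assuming inductively that $L'$ bounds a flat plumbing basket $F'$, the link $L$ is recovered from $L'$ by a single band move, and the point is that $F'$ can be enlarged by one more flat band, placed in a fresh page of $\mathcal{O}$, realizing that band move — the base case being the unknot (which bounds $D_{0}$ itself) and the unlink (treated componentwise, joining components by trivial bands). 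The second route is to begin from an arc presentation of $L$ (Cromwell): $L$ then meets finitely many pages $P_{1},\dots ,P_{k}$ of $\mathcal{O}$, each in a single unknotted arc, and these arcs — already lying in distinct pages — can be used to read off $D_{0}$ and the bands directly.

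In either route the core of the argument is the same. Every link bounds a disk-with-bands Seifert surface (e.g.\ apply Seifert's algorithm and then merge the Seifert disks along trivial crossing-free bands), but the bands will in general be both twisted and knotted. I would remove a full twist from a band by the standard device of replacing it with a pair of untwisted bands, at the cost of increasing the band count; comb each band out into unknotted pieces; and finally fan the resulting flat bands out into pairwise distinct pages of $\mathcal{O}$ by a small isotopy supported near $D_{0}$. Throughout one must keep the bands disjoint away from $D_{0}$ and keep $\partial F$ isotopic to $L$; an orientation mismatch on a band is corrected by traversing it the other way, which preserves flatness.

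\textbf{The main obstacle} is to perform these modifications so that the final object is literally an iterated plumbing of \emph{untwisted} annuli with gluing squares in $D_{0}$, which is the precise content of the definition. Not every ``disk with flat bands in $\mathbf{S}^{3}$'' arises in this way, so one must choose an order in which to plumb the annuli (say, by page angle), keep the successive gluing squares disjoint and suitably unlinked inside $D_{0}$, and possibly subdivide a band into two or insert a cancelling pair to make each plumbing legitimate. The remaining points — that finitely many bands suffice, the boundary computation $\partial F\simeq L$, and (in the arc-presentation route) that the arcs of the presentation occupy distinct pages — are then routine.
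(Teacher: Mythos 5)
This theorem is quoted by the paper from Furihata--Hirasawa--Kobayashi \cite{FHK}; the paper itself contains no proof, so your attempt can only be measured against what a complete argument would have to deliver. As written, it does not deliver it: what you have is a plan whose central step is exactly the content of the theorem, and you acknowledge this yourself (``the main obstacle is to perform these modifications so that the final object is literally an iterated plumbing of untwisted annuli with gluing squares in $D_{0}$''). The devices you invoke to get past it are not actually available in the form you use them. A knotted band cannot be ``combed out into unknotted pieces'' by subdivision, because the definition forces every gluing region to lie in $D_{0}$: subdividing a band means returning it to the disk, and doing so changes the surface and potentially the boundary link unless one specifies precisely which new bands are inserted and checks $\partial F$ is unchanged. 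Likewise, trading a full twist in a band for ``a pair of untwisted bands'' is a surface modification, not an isotopy, and preservation of the boundary link type has to be argued, not asserted. Finally, two flat bands attached to $D_{0}$ may link one another; the claim that the bands can be ``fanned out into pairwise distinct pages by a small isotopy supported near $D_{0}$'' is false in general --- absorbing the mutual linking and knotting of the bands into the combinatorics of the attaching arcs on $\partial D_{0}$ is precisely where the work lies.

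The two ``natural routes'' you sketch have the same problem. In the induction on crossing number, the inductive hypothesis hands you some abstractly constructed flat plumbing basket $F'$ for $L'$, but the band move recovering $L$ from $L'$ is located at the smoothed crossing of the original diagram; you have no control over where that attaching region sits relative to the embedded surface $F'$, so ``enlarge $F'$ by one more flat band in a fresh page'' is not justified (the isotopy class of the attaching arc on $\partial F'$ matters, and realizing it by a band whose core lies in a single page and whose feet lie on $\partial D_{0}$ is again the crux). In the arc-presentation route, the arcs of the presentation constitute the link itself, not the bands of a Seifert surface, and no construction of the surface from the arcs is given. So the proposal identifies reasonable starting points but leaves a genuine gap: no step in it actually produces, for an arbitrary link, a Seifert surface consisting of $D_{0}$ and untwisted, unknotted bands in distinct pages with $\partial F\simeq L$. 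To repair it you would need either to carry out the FHK-style argument in detail or to supply an explicit algorithm (e.g.\ from a braid or grid presentation) together with a verification that the boundary is preserved at every modification.
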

On flat plumbing baskets, there are some related works (for example, see \cite{CCK, FHK, Hirose-Nakashima, Imoto, Kim-flat, KKL, imoto-thesis}). 
In particular, in \cite{Kim-flat}, the concept of the flat plumbing basket number of a link is introduced. 
The {\it flat plumbing basket number} $fpbk(L)$ of an oriented link $L$ is the minimal number of bands to obtain a flat plumbing basket presentation of the link. 
Hirose-Nakashima \cite{Hirose-Nakashima} gave a lower bound for $fpbk(K)$ of a knot $K$ as follows. 
\begin{thm}[{\cite[Theorem~1.3]{Hirose-Nakashima}}]\label{thm:Hirose-Nakashima}
Let $K$ be a non-trivial knot, and $g(K)$ be the minimal genus of the Seifert surface (i.e. three genus) of $K$. 
For the Alexander polynomial $\Delta_{K}(t)$ of $K$, let $a$ be the coefficient of the term of highest degree, and 
\[
\deg\Delta_{K}(t):=(\text{the highest degree of } \Delta_{K}(t))-(\text{the lowest degree of }\Delta_{K}(t)). 
\]
Then $fpbk(K)$ is evaluated as follows.
\begin{enumerate}
\item If $a=\pm 1$, then $fpbk(K)\geq 2g(K)+2$. 
\item If $a\neq\pm 1$, then $fpbk(K)\geq \max\{2g(K)+2, \deg \Delta_{K}(t)+4\}$. 
\end{enumerate}
\end{thm}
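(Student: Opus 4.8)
The plan is to read the Alexander polynomial off a Seifert matrix produced by the flat plumbing basket, and then combine the resulting constraint with a genus estimate. So let $F$ be a flat plumbing basket presentation of $K$ with $n = fpbk(K)$ bands. Since $F$ is a disk with $n$ bands attached, $\chi(F) = 1 - n$, and since $K$ is a knot, $F$ is connected with a single boundary circle, so $\chi(F) = 1 - 2g(F)$; hence $n = 2g(F)$. In particular $fpbk(K)$ is even and $fpbk(K) = 2g(F) \ge 2g(K)$.

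Next I would pin down the Seifert matrix. Realise $F$ in the trivial open book with $D_0$ a page and the $n$ bands $b_1, \dots, b_n$ in pairwise distinct pages, indexed in the cyclic order of those pages; their feet on $\partial D_0$ form a chord diagram on a circle. Using the band cores, closed up across $D_0$, as a basis $e_1,\dots,e_n$ of $H_1(F) \cong \mathbb{Z}^{n}$, I would establish the following normal form for the Seifert matrix $V = (\mathrm{lk}(e_i^{+}, e_j))$: (i) $V$ has zero diagonal, because the bands are untwisted; (ii) for $i \ne j$ the entries $V_{ij}, V_{ji}$ lie in $\{-1, 0, 1\}$ and at most one of them is nonzero, since two of the bands are linked at most once --- their chords either cross or do not --- and each band is unknotted; (iii) which of $V_{ij}, V_{ji}$ carries the nonzero value is forced by that crossing together with the linear order of the pages. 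Establishing (iii) precisely --- isotoping the bands to a standard position in the open book and tracking the positive push-off --- is the heart of the argument, and the step I expect to be the main obstacle.

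Granting the normal form, I would compute $\Delta_K(t) \doteq \det(V - tV^{T})$. As $V$ has zero diagonal, this determinant is a signed sum over derangements $\sigma$, and by (ii) each nonvanishing term $\prod_i (V_{i\sigma(i)} - tV_{\sigma(i)i})$ equals $\pm t^{m(\sigma)}$ for an exponent $m(\sigma)$ read off from the chord diagram. The normal form keeps every such $m(\sigma)$ strictly between $0$ and $n$, so $\det(V - tV^{T})$ is $t$ times a polynomial of degree at most $n-2$; thus $\deg\Delta_K \le n-2$ and, always, $fpbk(K) \ge \deg\Delta_K + 2$. It also shows that the extreme surviving coefficient --- the one equal, up to sign, to the leading coefficient $a$ of $\Delta_K$ when $\deg\Delta_K = n-2$ --- is contributed by essentially a single derangement, hence is a product of $\pm 1$'s. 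Therefore, if $a \ne \pm 1$ that coefficient is forced to cancel, so $\deg\Delta_K \le n-4$, i.e. $fpbk(K) \ge \deg\Delta_K + 4$.

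It remains to improve $fpbk(K) \ge 2g(K)$ to $fpbk(K) \ge 2g(K)+2$. A flat plumbing basket is an iterated plumbing of unknotted, untwisted annuli onto $D_0$; because each such annulus is untwisted, $F$ is compressible as soon as it carries a band (equivalently, by Gabai's theorem that a plumbing is taut precisely when its summands are, and a trivial annulus is not taut, $F$ is not a $\chi$-maximal spanning surface of $K$). Compressing down to an incompressible surface lowers the genus by at least one, so $g(F) \ge g(K) + 1$ and $fpbk(K) = 2g(F) \ge 2g(K) + 2$. Assembling the two bounds, and using $\deg\Delta_K \le 2g(K)$, gives $fpbk(K) \ge 2g(K) + 2$ when $a = \pm 1$ and $fpbk(K) \ge \max\{2g(K) + 2,\ \deg\Delta_K + 4\}$ when $a \ne \pm 1$, as required. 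The content beyond the genus bound is the $\deg\Delta_K + 4$ estimate, and it is precisely there --- in identifying the unique derangement governing the leading coefficient --- that the combinatorics of the normal form of Step (iii) is indispensable.
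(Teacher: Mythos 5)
This theorem is not proved in the paper you were given: it is quoted verbatim from Hirose--Nakashima \cite[Theorem~1.3]{Hirose-Nakashima}, so there is no internal proof to compare against; your reconstruction does follow the same general strategy as the cited source (a Seifert matrix read off the flat basket code, plus a Murasugi-sum argument for the genus bound), but as written it has two genuine gaps. First, the whole Alexander-polynomial half rests on your normal form (iii) --- that for an interlaced pair exactly one of $V_{ij},V_{ji}$ is nonzero, and which one is dictated by the page order --- and you explicitly leave it unproved. This is not a technicality: for a general zero-diagonal Seifert matrix the $t^{0}$ and $t^{n}$ coefficients of $\det(V-tV^{T})$ are $\pm\det V$, which need not vanish, and a permutation contributes a product of binomials rather than a single monomial, so both ``$\deg\Delta\leq n-2$'' and the leading-coefficient claim collapse without the one-sidedness. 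Moreover the assertion that the extreme coefficient is ``contributed by essentially a single derangement'' needs the (short but necessary) argument that, with $V$ strictly upper triangular, the only fixed-point-free permutation with exactly one exceedance is $i\mapsto i-1$ cyclically, so the $t^{n-1}$ coefficient is $\pm V_{1n}V_{12}V_{23}\cdots V_{n-1,n}\in\{0,\pm1\}$; and passing from $\deg\Delta\leq n-3$ to $\deg\Delta\leq n-4$ uses the evenness of $\deg\Delta_{K}$ for knots, which you never invoke.

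Second, the genus step is argued incorrectly even though the correct tool is in your parenthesis. ``$F$ is compressible as soon as it carries a band'' is not justified by untwistedness, and the inference ``compressing down to an incompressible surface lowers the genus by at least one, so $g(F)\geq g(K)+1$'' is a non sequitur: incompressible Seifert surfaces need not have minimal genus, so landing on an incompressible surface tells you nothing about $g(K)$. What actually works is the Gabai route you mention in passing: a flat plumbing basket with at least one band is, by definition, an iterated plumbing whose last summand is a flat annulus; that annulus spans the two-component unlink with $\chi=0<2$, hence is not $\chi$-maximal, so by Gabai's theorem on Murasugi sums $F$ is not $\chi$-maximal either. Then $1-n=\chi(F)<1-2g(K)$, and since $n=2g(F)$ is even this gives $n\geq 2g(K)+2$ directly, with no mention of compressions. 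Rewriting the genus paragraph this way, and actually carrying out the linking-number computation behind (iii), would turn your outline into a complete proof along the lines of the cited one.
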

\par 
As mentioned above, flat plumbing baskets can be expressed in terms of an open book decomposition. 
On the other hand, by Thurston-Winkelnkemper's work \cite{Thurston-Winkelnkemper}, we can construct a contact structure of $\mathbf{S}^3$ from an open book decomposition. 
Hence, it seems that there are some relations between flat plumbing baskets and contact topology. 
\par
In this paper, we construct a Legendrian link $\mathcal{L}_{F}$ from a flat plumbing basket $F$ (see Section~\ref{sec:legendrian-fpb}). 
By using this Legendrian link, we give two inequalities on flat plumbing basket numbers as follows. 
\begin{thm}\label{thm:self-linking}
Let $L$ be a non-trivial oriented link in $\mathbf{S}^3$. 
Then we have 
\[
\max\{-\overline{sl}(L), -\overline{sl}(\overline{L})\}-1\leq fpbk(L), 
\]
where $\overline{L}$ is the mirror image of $L$ and $\overline{sl}(L)$ is the maximal self-linking number of $L$ (for detail see Section~$\ref{sec:self-linking}$ and Theorem~$\ref{thm:self-linking2}$).  
\end{thm}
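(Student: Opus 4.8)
The plan is to deduce the inequality from the behaviour of the Legendrian link $\mathcal{L}_{F}$ attached to a flat plumbing basket. Choose a flat plumbing basket presentation $F$ of $L$ with the minimal number $n:=fpbk(L)$ of bands, and let $\mathcal{L}_{F}$ be the Legendrian link in the standard contact $3$--sphere constructed from $F$ in Section~\ref{sec:legendrian-fpb}; by construction its underlying oriented link type is $L$. Write $\mathrm{tb}$ and $\mathrm{rot}$ for the Thurston--Bennequin and rotation numbers. The two transverse push-offs $T_{\pm}(\mathcal{L}_{F})$ are transverse representatives of $L$ with self-linking numbers $\mathrm{tb}(\mathcal{L}_{F})-\mathrm{rot}(\mathcal{L}_{F})$ and $\mathrm{tb}(\mathcal{L}_{F})+\mathrm{rot}(\mathcal{L}_{F})$ respectively, so
\[
\overline{sl}(L)\ \ge\ \mathrm{tb}(\mathcal{L}_{F})+|\mathrm{rot}(\mathcal{L}_{F})|\ \ge\ \mathrm{tb}(\mathcal{L}_{F}).
\]
Thus the inequality $-\overline{sl}(L)-1\le fpbk(L)$ will follow as soon as we show $\mathrm{tb}(\mathcal{L}_{F})\ge -n-1$.

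The crux is therefore a front computation for $\mathcal{L}_{F}$, and this is the step I expect to be the main obstacle. Because the $n$ bands of $F$ lie in $n$ distinct pages of the trivial open book of $\mathbf{S}^{3}$, a front of $\mathcal{L}_{F}$ can be assembled from a standard Legendrian unknot front for $\partial D_{0}$, contributing $\mathrm{tb}=-1$ and two cusps, together with one local model for each band: since a band contained in a single page encircles the binding once, its contribution is a ``loop around the binding'' carrying two further cusps and a collection of double points. From $\mathrm{tb}(\mathcal{L}_{F})=w-\tfrac12 c$, where $c=2+2n$ is the cusp number and $w$ the writhe of this front, the desired bound $\mathrm{tb}(\mathcal{L}_{F})\ge -n-1$ is equivalent to $w\ge 0$; and the double points occurring are precisely the crossings created by the $n$ band loops, which --- because the annuli are unknotted and untwisted and the bands are linearly ordered (hence pairwise nested) by the cyclic order of the pages --- can be arranged to cancel in sign in pairs, so that in fact $w=0$ and $\mathrm{tb}(\mathcal{L}_{F})=-n-1$. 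Making this bookkeeping precise, i.e.\ identifying exactly which crossings appear, checking the sign cancellation, and counting cusps correctly in the presence of the bands, is the real work; it is packaged in the construction of Section~\ref{sec:legendrian-fpb} and in the quantitative statement Theorem~\ref{thm:self-linking2}.

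It remains to account for the mirror image $\overline{L}$ in the statement, and this is a symmetry of the flat plumbing basket number. The mirror image of a disk is a disk, and the mirror image of an unknotted, untwisted annulus is again unknotted and untwisted, with the trivial open book carried to itself; hence the mirror image $\overline{F}$ of a flat plumbing basket $F$ for $L$ with $n$ bands is a flat plumbing basket for $\overline{L}$ with $n$ bands, so $fpbk(\overline{L})\le fpbk(L)$, and applying this to $\overline{L}$ gives the reverse inequality, so $fpbk(\overline{L})= fpbk(L)$. Applying the argument of the first two paragraphs to $\overline{L}$ and a minimal flat plumbing basket of it gives $-\overline{sl}(\overline{L})-1\le fpbk(\overline{L})= fpbk(L)$, and combining this with $-\overline{sl}(L)-1\le fpbk(L)$ yields $\max\{-\overline{sl}(L),-\overline{sl}(\overline{L})\}-1\le fpbk(L)$, as claimed. (The hypothesis that $L$ be non-trivial only serves to keep the construction and the front analysis above non-degenerate; for the unknot both sides of the inequality equal $0$.)
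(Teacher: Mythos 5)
There is a genuine gap, and it is located exactly where you flag the ``main obstacle''. After correctly noting $\overline{sl}(L)\ge \mathrm{tb}(\mathcal{L}_{F})+|\mathrm{rot}(\mathcal{L}_{F})|$, you discard the rotation number and reduce the theorem to the claim $\mathrm{tb}(\mathcal{L}_{F})\ge -n-1$ (indeed you assert $\mathrm{tb}(\mathcal{L}_{F})=-n-1$ via a front count with $w=0$). This claim is false for the Legendrian link of Section~\ref{sec:legendrian-fpb}: Lemma~\ref{lem:main} computes $\mathrm{tb}(\mathcal{L}_{F})=-2b_{1}(F)=-2n$, which is strictly less than $-n-1$ once $n\ge 2$. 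Worse, the reduction itself cannot be repaired by choosing a cleverer Legendrian representative, because $\mathrm{tb}(\mathcal{L})\ge -n-1$ for some Legendrian representative would give $-\overline{tb}(L)-1\le fpbk(L)$, and this stronger inequality is simply not true: for the negative trefoil one has $fpbk=4$ but $\overline{tb}=-6$, so $-\overline{tb}-1=5>4$. Dropping $|\mathrm{rot}|$ is therefore not a harmless weakening --- the whole point of the paper's argument is that the large negative rotation number compensates for the very negative $\mathrm{tb}$: Lemma~\ref{lem:rotation} gives $\mathrm{rot}(\mathcal{L}_{F})=-b_{1}(F)+1$, so the transverse push-off has
\[
sl(l_{F})=\mathrm{tb}(\mathcal{L}_{F})+|\mathrm{rot}(\mathcal{L}_{F})|=-2n+(n-1)=-n-1,
\]
which is precisely the bound $\overline{sl}(L)\ge -fpbk(L)-1$ you need. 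Your front-projection bookkeeping (``the double points can be arranged to cancel in sign in pairs, so $w=0$'') is asserted rather than proved, and it is exactly the place where the error enters; note also that the paper explicitly leaves drawing a front of $\mathcal{L}_{F}$ as an open question, and instead computes $\mathrm{tb}$ directly as a linking number with a push-off (each band contributes $-2$) and $\mathrm{rot}$ as a winding-angle sum.

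The remaining parts of your outline are fine and agree with the paper: the use of the transverse push-offs with $sl(l_{\pm})=\mathrm{tb}\mp\mathrm{rot}$, the choice of a minimal basket, and the symmetry $fpbk(L)=fpbk(\overline{L})$ (which the paper records in Section~\ref{sec:preliminary}) to handle the mirror image. To complete a correct proof you must replace the reduction to ``$\mathrm{tb}(\mathcal{L}_{F})\ge -n-1$'' by the two computations of Lemmas~\ref{lem:main} and~\ref{lem:rotation} and keep the rotation-number term throughout.
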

\begin{thm}\label{thm:main}
Let $L$ be a non-trivial oriented link in $\mathbf{S}^3$. 
Suppose that $L$ has no split component which is isotopic to the unknot. 
Then, we obtain 
\[
\max\{-\overline{tb}(L), -\overline{tb}(\overline{L})\}+2\leq 2fpbk(L), 
\]
where $\overline{tb}(L)$ is the maximal Thurston-Bennequin number of $L$. 
The equality holds if and only if $L$ is a non-trivial alternating torus link $T_{2,n}$ for some $|n|\geq 2$. 
\end{thm}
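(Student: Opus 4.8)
\emph{The inequality.} First I would fix a flat plumbing basket presentation $F$ of $L$ realising $b=fpbk(L)$, with disk $D_0$ and bands $B_1,\dots,B_b$ in distinct pages of $\mathcal{O}$, and pass to the associated Legendrian link $\mathcal{L}_F$ from Section~\ref{sec:legendrian-fpb}. Exactly as in the proof of Theorem~\ref{thm:self-linking}, the key is a lower bound for a classical invariant of $\mathcal{L}_F$ in terms of $b$; here it is
\[
tb(\mathcal{L}_F)\ \geq\ 2-2b\ \bigl(=2\chi(F)\bigr).
\]
I would prove this by reading $tb(\mathcal{L}_F)=(\text{writhe})-(\text{\# right cusps})$ off a front projection of $\mathcal{L}_F$: each of the $b$ bands lies in its own page and contributes a bounded, explicitly computable number of cusps and a controlled writhe, so summing the local contributions over $D_0$ and the $b$ bands gives the bound, with the hypothesis that $L$ has no split component isotopic to the unknot guaranteeing that no local contribution can be cancelled by a Legendrian isotopy (otherwise one could split off a standard Legendrian unknot and spoil the count). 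Granting this, $\overline{tb}(L)\geq tb(\mathcal{L}_F)\geq 2-2fpbk(L)$, i.e. $-\overline{tb}(L)+2\leq 2fpbk(L)$. Reflecting $F$ in the plane of $D_0$ is again a flat plumbing basket, now of $\overline{L}$, with the same number of bands, so $fpbk(\overline{L})=fpbk(L)$ and the same inequality holds for $\overline{L}$; taking the larger of the two yields the claim.

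\emph{Equality, the ``if'' direction.} For $L=T_{2,n}$ with $|n|\geq 2$ I would exhibit an explicit flat plumbing basket with exactly $|n|+1$ bands (easily drawn directly, compare the constructions in \cite{FHK,Kim-flat}), giving $fpbk(T_{2,n})\leq|n|+1$. Combined with the inequality this forces $fpbk(T_{2,n})=|n|+1$ once one checks $\max\{-\overline{tb}(T_{2,n}),-\overline{tb}(T_{2,-n})\}=2|n|$, a short computation with the known values $\overline{tb}(T_{2,n})=n-2$ (realised by the standard positive closed $2$-braid front) and $\overline{tb}(T_{2,-n})=-2|n|$ (the negative alternating torus link, where the Kauffman bound is sharp). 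Then $\max+2=2|n|+2=2fpbk(T_{2,n})$, so equality holds; note also that for the minimal $F$ one has $2\chi(F)=2(1-(|n|+1))=-2|n|=\overline{tb}(T_{2,-n})$, so the bound of the first part is itself attained.

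\emph{Equality, the ``only if'' direction --- the main obstacle.} Suppose equality holds and, passing to the mirror if necessary, write $2fpbk(L)=-\overline{tb}(L)+2$; fix a flat plumbing basket $F$ realising $b=fpbk(L)$. Then both inequalities used above are forced to be sharp: $\mathcal{L}_F$ is a $tb$-maximising Legendrian representative of $L$, and the front-projection count giving $tb(\mathcal{L}_F)=2-2b$ has no slack at any band. The first condition says the essentially canonical Legendrian diagram built from $F$ already achieves $\overline{tb}(L)$, and the second rigidly constrains how the $b$ bands sit in the pages --- each local cusp and writhe contribution is minimal, so no band can be ``absorbed'' and the bands cannot link one another beyond a single nested chain. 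I would then enumerate the flat plumbing baskets satisfying these constraints, using Theorem~\ref{thm:Hirose-Nakashima} (which via $fpbk(L)\geq 2g(L)+2$ bounds the genus of $F$, hence the number of candidate combinatorial types), and show that their boundaries are precisely the $T_{2,n}$. This enumeration is the hard part: one must check that any branching of the band chain, any non-Hopf local plumbing, or any surplus band forces either $b>(-\overline{tb}(L)+2)/2$ or $tb(\mathcal{L}_F)<\overline{tb}(L)$, and in particular that near-misses such as $T_{2,3}\# T_{2,3}$ are ruled out by failing one of the two sharpness conditions; turning this trade-off into a clean case analysis is the crux of the proof.
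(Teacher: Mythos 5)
There is a genuine gap, and it sits at the very first step. Your key claim is $tb(\mathcal{L}_F)\geq 2-2b$, which you then feed into $\overline{tb}(L)\geq tb(\mathcal{L}_F)$. But the Legendrian link associated with a flat plumbing basket has $tb(\mathcal{L}_F)=-2b_1(F)$ \emph{exactly} (Lemma~\ref{lem:main}: each band contributes $-2$ to the linking number with the push-off), so the inequality you want to read off a front is simply false for $\mathcal{L}_F$; applied as stated it would only give $-\overline{tb}(L)\leq 2fpbk(L)$, missing the $+2$. The paper gets the $+2$ not from a cusp/writhe count of $\mathcal{L}_F$ but from modifying $\mathcal{L}_F$: a ``shortcut'' move along $\partial N(U)$ (a destabilization in the Lagrangian projection) raises $tb$ by exactly $1$ (Lemma~\ref{lem:shortcut1}), and the hypothesis that $L$ has no split unknotted component enters precisely here --- it forbids the subcode $(n,n)$ in the flat basket code, which guarantees two descents $(n,i_1)$, $(n,i_2)$ and hence two available shortcuts (Lemma~\ref{lem:shortcut2}), giving $\overline{tb}(L)\geq -2b+2$. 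Your proposed mechanism for using the hypothesis (``no local contribution can be cancelled'') does not correspond to anything provable about $\mathcal{L}_F$; moreover, producing a front projection of $\mathcal{L}_F$ is itself posed in Section~\ref{sec:discussion} as an open problem, so ``reading $tb$ off a front'' is not an available tool here.

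The equality discussion has the same shape of problem. Your ``if'' direction is essentially the paper's (explicit basket with $|n|+1$ bands plus $\overline{tb}(\overline{T_{2,n}})=-2n$), but your ``only if'' direction is left as an admitted, unexecuted enumeration, and the two ``sharpness conditions'' you propose to enumerate against are inherited from the false bound above. The paper's argument is much more rigid and entirely combinatorial: equality forces $tb(\mathcal{L}_F)=\overline{tb}(L)-2$, so no more than two shortcuts can exist; since shortcuts occur at every descent of the cyclic flat basket code, the code must have exactly two descents, which forces it to be $(1,2,\dots,m,1,2,\dots,m)$, whose boundary is the negative alternating torus link $\overline{T_{2,m-1}}$ (Figure~\ref{fig:torus}). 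Without the shortcut lemmas and this reading of the code, your plan has no route to either the inequality with the correct constant or the classification of the equality case.
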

As a corollary, we determine $fpbk(T_{p,q})$ for the $(p,q)$-torus link $T_{p,q}$ (see Corollary~\ref{cor:torus}). 
Moreover, by using Theorems~\ref{thm:self-linking} and \ref{thm:main}, we improve \cite[Table~1]{Hirose-Nakashima} (see Table~\ref{table1}). 
\par
This paper is organized as follows: 
In Section~\ref{sec:preliminary}, we recall the definitions of open book decompositions, flat plumbing baskets and contact structures. 
In Section~\ref{sec:legendrian-fpb}, we construct a Legendrian link from a flat plumbing basket. 
Moreover, we describe a relation among the maximal self-linking numbers, the maximal Thurston-Bennequin numbers and the flat plumbing basket numbers. 
In Section~\ref{sec:self-linking}, we prove Theorem~\ref{thm:self-linking} (Theorem~\ref{thm:self-linking2}). 
In Section~\ref{sec:thurston-bennequin}, we prove Theorem~\ref{thm:main}. 
In Section~\ref{sec:discussion}, we give further observations. 
\section{Preliminary}\label{sec:preliminary}
\subsection{Trivial open book decomposition and flat plumbing basket}
Let $M$ be an oriented closed $3$-manifold. 
Suppose that for a link $L$ in $M$, there is a fiber projection $\pi:M\setminus L\rightarrow \mathbf{S}^1$ such that each fiber is the interior of a Seifert surface of $L$. 
Then, $(L, \pi)$ is called an {\it open book decomposition} of $M$. The closure of each fiber is called a {\it page}, and $L$ is called the {\it binding}.
\par 
Let $U$ be the unknot in $\mathbf{S}^3$. 
The knot complement $\mathbf{S}^3\setminus U$ is a product $\operatorname{Int}(D^2)\times \mathbf{S}^1$. 
Hence, there exists a fiber projection $\pi\colon \mathbf{S}^3\setminus U\rightarrow \mathbf{S}^1$ whose fibers $\pi^{-1}(D_{\theta})$ are open disks for $0\leq \theta <2\pi$. 
Put $D_{\theta}:=\overline{\pi^{-1}(D_{\theta})}$. 
An orientation of $U$ induces an orientation of each fiber $D_{\theta}$ and a positive direction of the fibration $\{D_{\theta}\}_{0\leq \theta<2\pi}$ (see Figure~\ref{fig:openbook}). 
This fibration is called the {\it trivial open book decomposition} of $\mathbf{S}^3$, denoted by $\mathcal{O}$. 
Then, the unknot is the binding, and each $D_{\theta}$ is a page of $\mathcal{O}$. 
Throughout this paper, we consider the trivial open book decomposition. 
\begin{figure}[h]
\begin{center}
\includegraphics[scale=0.32]{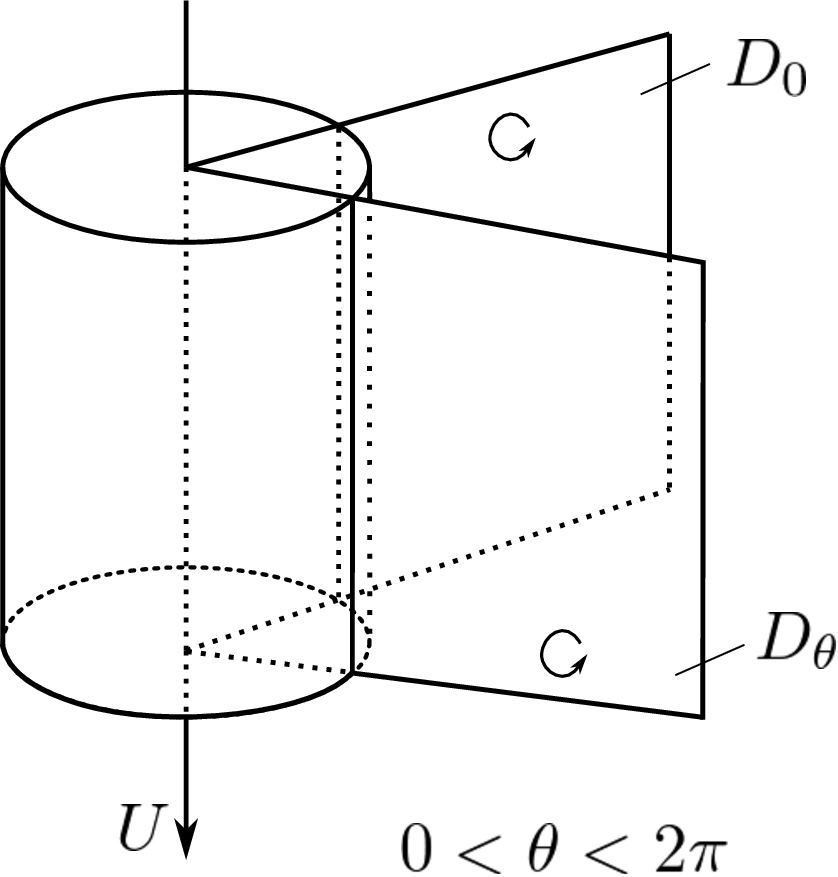}
\end{center}
\caption{Open book decomposition}
\label{fig:openbook}
\end{figure}
\par
A Seifert surface $F$ is a {\it flat plumbing basket} if there are finitely many bands $B_{1}, \dots, B_{n}$ and $0<\theta_{1}<\dots<\theta_{n}<2\pi$ such that 
$F=D_{0}\cup B_{1} \cup \dots \cup B_{n}$, each band $B_{i}$ is contained in $D_{\theta_{i}}$ and $B_{i}\cap U$ consists of two arcs. 
We call the subscript $i$ of $B_{i}$ the {\it label} of the band. 
A flat plumbing basket $F$ is a {\it flat plumbing basket presentation} of an oriented link $L$ if $\partial F$ is ambient isotopic to $L$ (for example, see Figure~\ref{fig:fpb1}). 
\par 
For a flat plumbing basket $F$, by recording the labels of the bands as one travel along $U=\partial D_{0}$, one obtain a cyclic word $W_{F}$ in $\{1, \dots, n\}$ such that each letter appears exactly twice, where $n$ is the number of the bands of $F$. 
We call $W_{F}$ the {\it flat basket code for $F$}. 
\par 
We define the {\it flat plumbing basket number} $fpbk(L)$ of an oriented link $L$ to be the minimal number of bands to obtain a flat plumbing basket presentation of $L$ from $D_{0}$. 
Namely, 
\[
fpbk(L):=\min\{b_{1}(F)\mid F\text{ is a flat plumbing basket presentation of }L\}, 
\]
where $b_{1}(F)$ is the first betti number of $L$. 
We remark that $fpbk(L)\in 2\mathbf{Z}+|L|-1$, where $|L|$ is the number of the components of $L$, and $fpbk$ is preserved under taking mirror image. 
\begin{figure}[h]
\begin{center}
\includegraphics[scale=0.55]{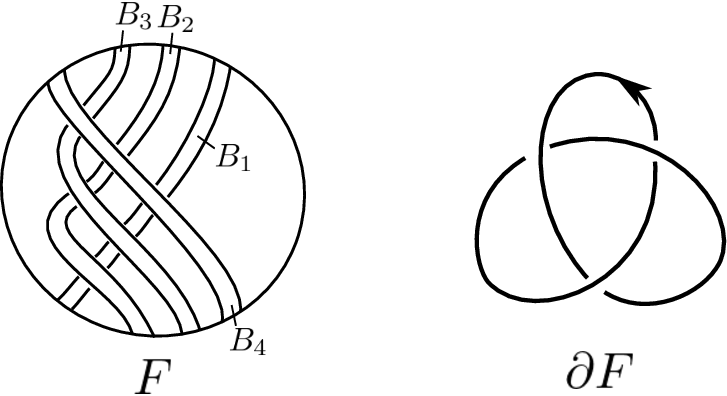}
\end{center}
\caption{An example of a flat plumibing basket $F$. It is a flat plumbing presentation of the negative trefoil. The flat basket code is $W_{F}=(1,2,3,4,1,2,3,4)$.}
\label{fig:fpb1}
\end{figure}
\subsection{Trivial open book decomposition and standard contact structure}
In this section, we recall the relation between open book decompositions and contact structures. 
For detail, for example, see \cite{OS}. 
\par
Let $M$ be an oriented closed $3$-manifold. 
A $1$-form $\alpha \in \Omega^{1}(M)$ is a {\it contact form} if $\alpha \wedge d\alpha$ is nowhere $0$. 
A $2$-dimensional distribution $\xi \subset TM$ is a {\it contact structure} if there is a contact form $\alpha$ such that $\xi=\operatorname{Ker}\alpha$. 
A contact structure $\xi$ on $M$ is {\it supported} by an open book decomposition $(L, \pi)$ if $\xi$ can be represented by a contact form $\alpha$ such that the binding is a transverse to $\xi$, $d\alpha$ is a volume form on every page and the orientation of binding induced by $\alpha$ agrees with the orientation of the pages. 
\par  
It is known that for any open book decomposition of $M$, we can construct a contact structure on $M$ supported by the open book decomposition, by Thurston-Winkelnkemper's construction \cite{Thurston-Winkelnkemper}. 
By Giroux's work \cite{Giroux1}, such a contact structure is unique up to contact isotopy.  
In particular, the trivial open book decomposition $\mathcal{O}$ of $\mathbf{S}^3$ supports the standard contact structure $\xi_{std}$ of $\mathbf{S}^3$. 
See Figure~\ref{fig:standard}. 
\par
For a surface $\Sigma$ in $\mathbf{S}^3$, we consider $\xi_{std}\cap T\Sigma $. 
For generic $\Sigma$, this intersection is a line field except at finitely many points where $\Sigma$ is tangent to $\xi_{std}$. 
By integrating $\xi_{std}\cap T\Sigma$, we obtain a foliation of $\Sigma$ with singularities. 
Such a foliation is called the {\it characteristic foliation} of $\Sigma$ in $\xi_{std}$. 
In the next section, Section~\ref{sec:legendrian-fpb}, we will construct a Legendrian link $\mathcal{L}_{F}$ from a flat plumbing basket $F$, which is isotopic to the boundary $\partial F$. 
Then, it is convenient to see the characteristic foliation of the boundary of a tubular neighborhood of $U$ and each fiber $D_{\theta}$ (see Figures~\ref{fig:foliation} and \ref{fig:foliation2}). 
\begin{figure}[h]
\begin{center}
\includegraphics[scale=0.5]{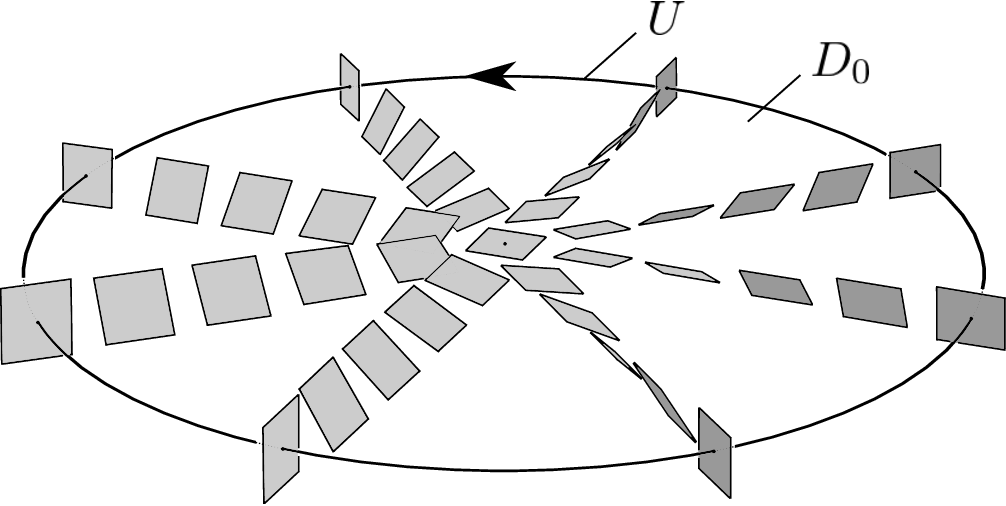}
\end{center}
\caption{The standard contact structure $\xi_{std}$ on $\mathbf{S}^3$ }
\label{fig:standard}
\end{figure}
\begin{figure}[h]
\begin{center}
\includegraphics[scale=0.45]{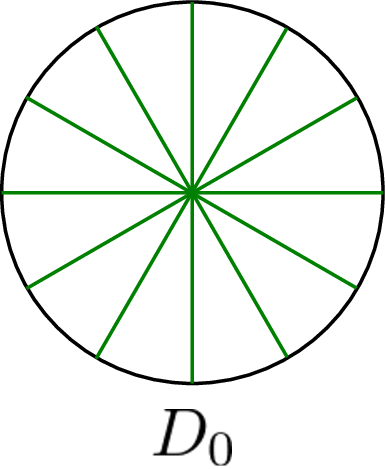}
\end{center}
\caption{Characteristic foliation of $D_{0}$. This characteristic foliation has one singular point. The characteristic foliations of other fibers $D_{\theta}$ are similar to that of $D_{0}$. In particular there is one singular point. }
\label{fig:foliation}
\end{figure}
\begin{figure}[h]
\begin{center}
\includegraphics[scale=0.45]{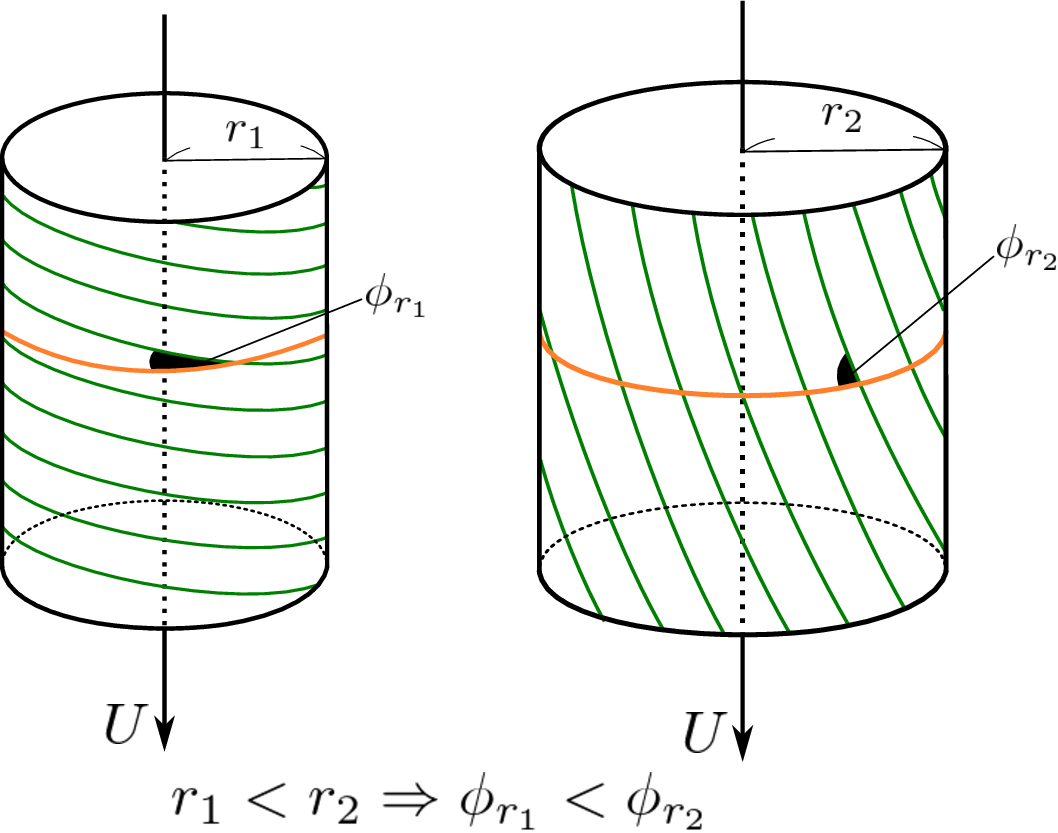}
\end{center}
\caption{Characteristic foliation of the boundary of a tubular neighborhood of the binding $U$. If we increase the radius of the tube, then the angle $\phi_{r}$ between the foliation and the meridian also increases. }
\label{fig:foliation2}
\end{figure}
%
%
\section{Legendrian link from flat plumbing basket}\label{sec:legendrian-fpb}
%
Let $\mathcal{L}$ be an oriented link in $\mathbf{S}^3$. 
Then, $\mathcal{L}$ is a {\it Legendrian link} in $\xi_{std}$ if it is tangent to $\xi_{std}$. 
In this section, we construct a Legendrian link from a flat plumbing basket. 
\par 
Let $F$ be a flat plumbing basket with $b_{1}(F)>0$. 
Let $N(U)$ be a tubular neighborhood of the binding $U$. 
Since $U$ is transverse to $\xi_{std}$, the boundary $\partial F$ is not a Legendrian link. 
However, since each page is ``almost" tangent to $\xi_{std}$, we can regard $\partial F\setminus N(U)$ as a disjoint union of Legendrian arcs. 
More precisely, we can approximate each component of $\partial F\setminus N(U)$ by a Legendrian arc without changing the link type of $\partial F$. 
To see this, we thin each band $B_{i}$ sufficiently, and we move the band without moving $B_{i}\cap U$ so that the core of $B_{i}$ is on two leaves of the characteristic foliation of $D_{\theta_{i}}$. 
See Figure~\ref{fig:band-move}. 
Then, there is a small perturbation (isotopy) $f_{t}\colon \mathbf{S}^3\rightarrow \mathbf{S}^3$ such that $f_{0}=\operatorname{id}$ and $f_1(\partial B_{i}\setminus N(U))$ is Legendrian (by Legendrian realization principle \cite{Honda}). 
After this operation, $\partial B_{i}\setminus N(U)$ can be regarded as Legendrian arcs. 
\par
As mentioned above, we see that $\partial F\cap N(U)$ is not Legendrian. 
In order to construct a Legendrian link from $\partial F$, we replace each component $\beta$ of $\partial F\cap N(U)$ by an arc $\alpha$ depicted in Figure~\ref{fig:replacement}. 
Each $\alpha$ is in a leaf of the characteristic foliation of $\partial (N(U))$, connects the two points of $\partial \beta$ and satisfies $|\alpha\cap D_{0}|=1$. 
We can take such an $\alpha$ by adjusting the radius of $N(U)$ locally.  
\par
By this replacement (and smoothening the curve at the points of $\partial F\cap \partial(N(U))$ in $\xi_{std}$), we obtain a Legendrian link from $\partial F$. 
We denote the Legendrian link by $\mathcal{L}_{F}$, and called the {\it Legendrian link associated with $F$}.  
For example, see Figure~\ref{fig:example}. 
\par
The classical invariants, the Thurston Bennequin number $tb$ and the rotation number $rot$, of $\mathcal{L}_{F}$ can be computed as follows. 
\begin{figure}[h]
\begin{center}
\includegraphics[scale=0.5]{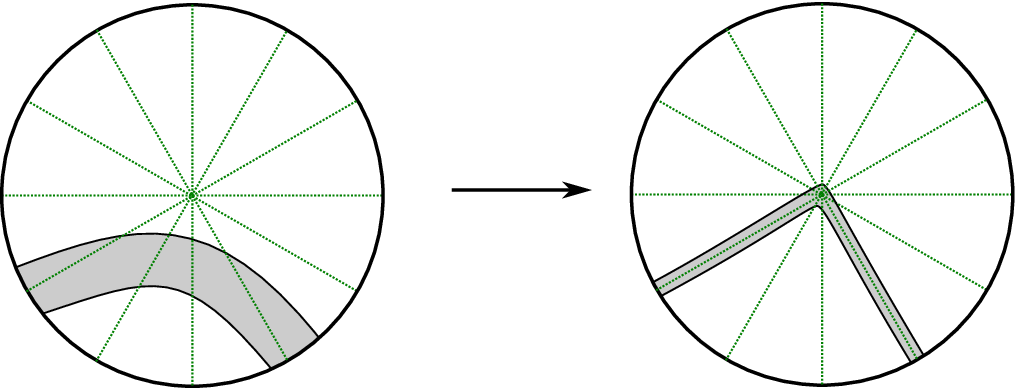}
\end{center}
\caption{Thinning and moving a band $B_{i}$ so that the core of $B_{i}$ is on two leaves of the characteristic foliation of $D_{\theta_{i}}$. }
\label{fig:band-move}
\end{figure}
\begin{figure}[h]
\begin{center}
\includegraphics[scale=0.5]{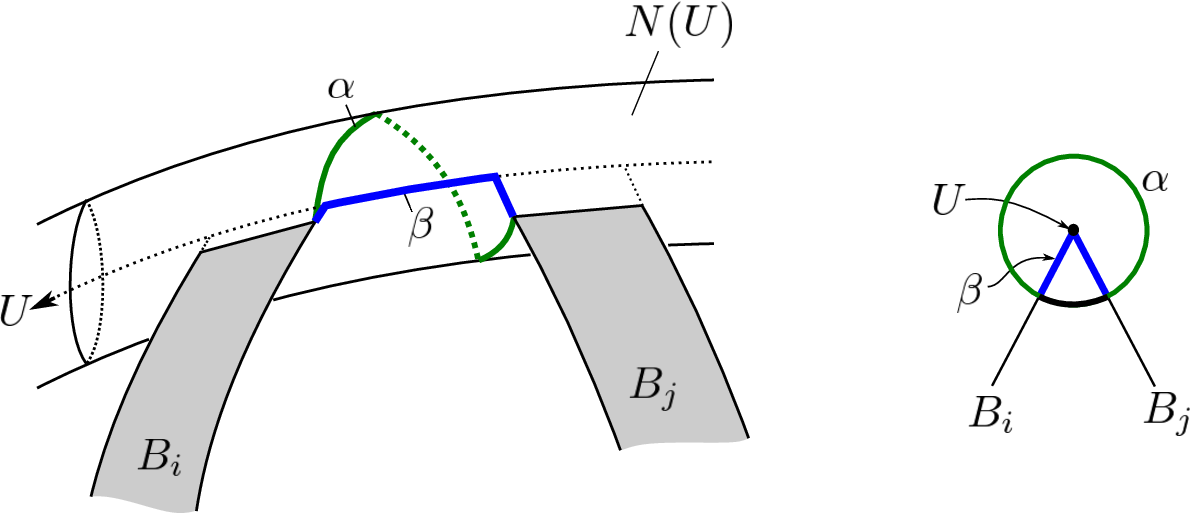}
\end{center}
\caption{
In order to construct a Legendrian link $\mathcal{L}_{F}$ from $\partial F$, we replace each component $\beta$ of $\partial F\cap N(U)$ by an arc $\alpha$. 
The arc $\alpha$ is contained in a leaf of the characteristic foliation of $\partial (N(U))$ (see Figure~\ref{fig:foliation2}), connects the two points of $\partial \beta$ and satisfies $|\alpha\cap D_{0}|=1$. 
By adjusting the radius of $N(U)$ locally, we can take such an $\alpha$. 
}
\label{fig:replacement}
\end{figure}
\begin{figure}[h]
\begin{center}
\includegraphics[scale=0.6]{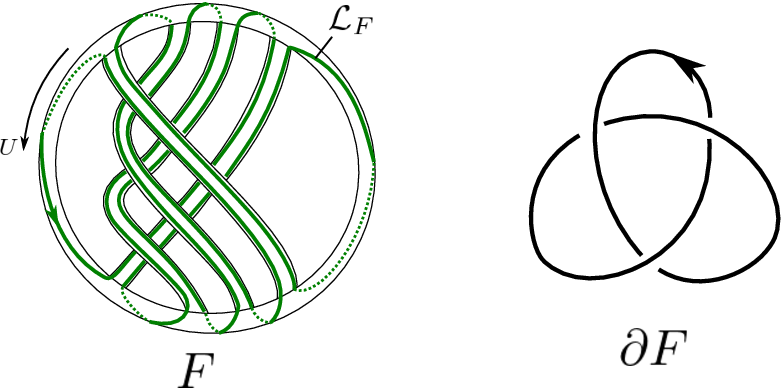}
\end{center}
\caption{An example of $\mathcal{L}_{F}$. This is isotopic to the negative trefoil knot. Precisely, the radius of $N(U)$ is non-uniform. }
\label{fig:example}
\end{figure}
\begin{lem}\label{lem:main}
Let $F$ be a flat plumbing basket with $b_1(F)>0$ and $\mathcal{L}_{F}$ be the Legendrian link associated with $F$. 
Then, we obtain 
\[
tb(\mathcal{L}_{F})=-2b_{1}(F), 
\]
where $tb(\mathcal{L}_{F})$ is the Thurston-Bennequin number of $\mathcal{L}_{F}$. 
\end{lem}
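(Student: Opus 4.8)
The plan is to compute $tb(\mathcal{L}_{F})$ as the difference between the contact framing of $\mathcal{L}_{F}$ and the Seifert framing coming from a concrete surface. Every move used to produce $\mathcal{L}_{F}$ from $\partial F$ — thinning the bands and sliding their cores onto leaves of the characteristic foliations of the pages (Figure~\ref{fig:band-move}), Legendrian-approximating the arcs of $\partial F\setminus N(U)$, and replacing each component $\beta$ of $\partial F\cap N(U)$ by the arc $\alpha\subset\partial N(U)$ (Figure~\ref{fig:replacement}) — is realized by a small isotopy of $F$ supported near the relevant piece. Hence $\mathcal{L}_{F}$ bounds a compact surface $\Sigma_{F}$ isotopic to $F$, so $b_{1}(\Sigma_{F})=b_{1}(F)$, and $tb(\mathcal{L}_{F})$ equals the signed number of intersections of the contact push-off of $\mathcal{L}_{F}$ with $\Sigma_{F}$, equivalently the winding of $\xi_{std}$ relative to $T\Sigma_{F}$ measured once around $\mathcal{L}_{F}$.

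First I would localize this winding by splitting $\mathcal{L}_{F}$ into three kinds of pieces: (i) the Legendrian arcs coming from the band cores, each lying near a page $D_{\theta_{i}}$ and away from the binding; (ii) the arcs $\alpha$ on $\partial N(U)$; and (iii) the short corner arcs near $\partial F\cap\partial N(U)$ where (i) meets (ii) and the curve has been smoothed inside $\xi_{std}$. Along a piece of type (i) the contact structure $\xi_{std}$ is $C^{0}$-close to the tangent planes of $D_{\theta_{i}}$ — this is exactly the feature of $\mathcal{O}$ reflected in Figure~\ref{fig:foliation} — and $\Sigma_{F}$ runs parallel to $D_{\theta_{i}}$ there, so $\xi_{std}$ and $T\Sigma_{F}$ never become anti-parallel and contribute no net winding; a piece of type (iii) is a local rounding and likewise contributes nothing. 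Thus $tb(\mathcal{L}_{F})$ is the sum of the contributions of the arcs $\alpha$.

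It then remains to count the arcs $\alpha$ and to evaluate one of them. For the count: $F=D_{0}\cup B_{1}\cup\dots\cup B_{n}$ with $n=b_{1}(F)$, and each $B_{i}$ meets $U$ in two arcs, so travelling along $U=\partial D_{0}$ one passes $2n$ band-feet, which cut $U$ into $2n$ arcs; for a thin $N(U)$ each of these is (the $U$-part of) one component $\beta$ of $\partial F\cap N(U)$, so there are exactly $2n=2\,b_{1}(F)$ arcs $\alpha$. For a single $\alpha$: it lies on a leaf of the characteristic foliation of $\partial N(U)$ of Figure~\ref{fig:foliation2}, which spirals around $\partial N(U)$ almost in the meridional direction, and it is normalized by $|\alpha\cap D_{0}|=1$; one checks that the contact framing along $\alpha$ winds once negatively relative to the framing induced by $\Sigma_{F}$ (equivalently, the contact push-off of $\alpha$ crosses $\Sigma_{F}$ algebraically $-1$ times), the sign being fixed by the fact that $U$ is a positive transverse unknot in $\xi_{std}$; as a consistency check, when $n=1$ the surface $F$ is an annulus and $\mathcal{L}_{F}$ is a Legendrian $2$-component unlink with $tb=-2$. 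Summing over the $2\,b_{1}(F)$ arcs $\alpha$ gives $tb(\mathcal{L}_{F})=-2\,b_{1}(F)$.

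The main obstacle is this last computation, i.e.\ making ``$-1$ per $\alpha$'' rigorous: it requires fixing an explicit contact form supporting $\mathcal{O}$ so that the characteristic foliations in Figures~\ref{fig:foliation} and \ref{fig:foliation2} are genuinely under control, tracking orientations carefully so that the sign comes out $-1$ rather than $+1$, and verifying that the Legendrian approximations of the page arcs (which need not lie on $D_{\theta_{i}}$) and the smoothings on $\partial N(U)$ really add nothing to the winding. Once the behaviour of $\xi_{std}$ along $\partial N(U)$ relative to $\Sigma_{F}$ is pinned down, the remainder is the bookkeeping above.
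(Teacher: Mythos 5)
Your proposal is correct in substance, but it organizes the computation differently from the paper. The paper works directly from the definition $tb(\mathcal{L}_{F})=lk(\mathcal{L}_{F},\mathcal{L}_{F}^{+})$, with $\mathcal{L}_{F}^{+}$ the push-off transverse to $\xi_{std}$, and evaluates the linking number diagrammatically: near each band one sees four negative crossings between $\mathcal{L}_{F}$ and $\mathcal{L}_{F}^{+}$ (Figure~\ref{fig:contribution}), giving $-4/2=-2$ per band, while crossings between strands of distinct bands contribute nothing; summing over the $b_{1}(F)$ bands yields $-2b_{1}(F)$. You instead rewrite the same linking number as the algebraic intersection of the push-off with a Seifert surface $\Sigma_{F}$ isotopic to $F$ (equivalently the twisting of the contact framing against the surface framing) and localize the count at the $2b_{1}(F)$ meridional arcs $\alpha$ on $\partial N(U)$, claiming $-1$ per arc and zero from the page arcs and the corner smoothings. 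The two bookkeepings agree: your ``$-1$ per $\alpha$'' is exactly the pair of negative crossings per meridional wrap that the paper's figure records (the two wraps adjacent to a band accounting for its $-2$), and your assertion that the pieces of type (i) and (iii) contribute nothing corresponds to the paper's remark that inter-band crossings do not contribute, since distinct bands lie in distinct pages and the push-off direction is transverse to the pages. What your route buys is a framing-theoretic formulation avoiding a link diagram, grouped by the arcs $\alpha$, which is arguably the more natural unit because an $\alpha$ may sit between feet of different bands; what it costs is that the entire burden falls on the single local computation you flag at the end (the characteristic foliation of $\partial N(U)$, the orientation conventions forcing the sign $-1$, and the check that the Legendrian approximation near the pages adds no twisting), which is precisely what the paper disposes of by the picture in Figure~\ref{fig:contribution}. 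Your $n=1$ consistency check (two-component unlink with total $tb=-2$, including the $2\,lk$ cross term) is a reasonable sanity check but, as you yourself note, not a substitute for that local verification.
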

\begin{proof}
Let $\mathcal{L}_{F}^{+}$ be a Legendrian link obtained by pushing of $\mathcal{L}_{F}$ in the direction of a nonzero vector field transverse to $\xi_{std}$. 
Then the Thurston-Bennequin number $tb(\mathcal{L}_{F}) $ is computed by 
\[
tb(\mathcal{L}_{F})=lk(\mathcal{L}_{F}, \mathcal{L}_{F}^{+}),  
\]
where $lk(\mathcal{L}_{F}, \mathcal{L}_{F}^{+})$ is the linking number between $\mathcal{L}_{F}$ and $\mathcal{L}_{F}^{+}$. 
By Figure~\ref{fig:contribution}, we see that each band of $F$ contributes $-2$ to the linking number. 
Note that the crossings between distinct bands do not contribute the linking number. 
Hence, we finish the proof. 
\end{proof}
\begin{figure}[h]
\begin{center}
\includegraphics[scale=0.45]{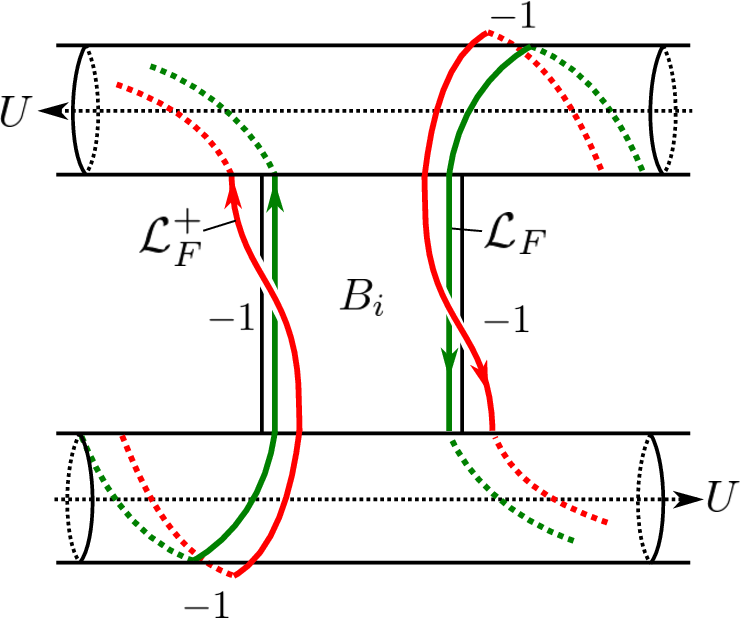}
\end{center}
\caption{Near a band of a flat plumbing basket. Each band contributes $-4/2=-2$ to the linking number $lk(\mathcal{L}_{F}, \mathcal{L}_{F}^{+})$. }
\label{fig:contribution}
\end{figure}
\begin{lem}\label{lem:rotation}
Let $F$ be a flat plumbing basket with $b_1(F)>0$ and $\mathcal{L}_{F}$ be the Legendrian link associated with $F$. 
Suppose that $\mathcal{L}_{F}$ has the orientation which agrees with the orientation of the binding $U$. 
Then, we obtain 
\[
rot(\mathcal{L}_{F})=-b_{1}(F)+1, 
\]
where $rot(\mathcal{L}_{F})$ is the rotation number of $\mathcal{L}_{F}$. 
\end{lem}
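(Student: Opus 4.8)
The plan is to compute the rotation number directly from the construction of $\mathcal{L}_{F}$ by tracking how the tangent vector of $\mathcal{L}_{F}$ rotates inside the contact planes $\xi_{std}$, exactly as we computed $tb(\mathcal{L}_{F})$ in Lemma~\ref{lem:main} by a local contribution argument. Recall that $rot(\mathcal{L}_{F})$ is the winding number of the positively oriented tangent field of $\mathcal{L}_{F}$ relative to a trivialization of $\xi_{std}|_{\mathcal{L}_{F}}$ coming from a Seifert surface; since $\xi_{std}$ is the standard contact structure, I would fix the global trivialization of $\xi_{std}$ over the complement of the binding coming from the pages $D_\theta$, and measure the rotation of the tangent to $\mathcal{L}_{F}$ against this frame. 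Decomposing $\mathcal{L}_{F}$ into the Legendrian arcs in the pages (the thinned, moved bands of Figure~\ref{fig:band-move}) and the connecting arcs $\alpha$ in $\partial N(U)$ of Figure~\ref{fig:replacement}, the total rotation is a sum of local contributions, one for each band and one for each arc $\alpha$.

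The key steps, in order, are as follows. First I would recall that along a Legendrian arc lying on two leaves of the characteristic foliation of a page $D_{\theta_i}$, the tangent direction is essentially constant with respect to the page framing, so a band by itself contributes nothing; the rotation is accumulated entirely at the ``turns'' of $\mathcal{L}_{F}$. Second, I would analyze a single band together with its two feet: travelling along $\partial B_i$, one makes a turn near each of the two arcs where $B_i$ meets $U$, and I would read off from the local picture (the same figure used for the $tb$ computation, together with Figure~\ref{fig:foliation}) that each band contributes $-1$ to $rot(\mathcal{L}_{F})$ — a half-turn at each of its two feet, both of the same sign because the characteristic foliation of each page has a single singular point and the two feet of the band sit on opposite sides of the disk. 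Third, I would handle the arcs $\alpha$ in $\partial N(U)$: each such arc, lying on a leaf of the characteristic foliation of Figure~\ref{fig:foliation2}, contributes a controlled amount, and the total over all of them cancels against the ``$+1$'' needed to turn the count from $-b_1(F)$ into $-b_1(F)+1$. Concretely, I expect the bookkeeping to show $rot(\mathcal{L}_{F}) = -\sum_{i=1}^{b_1(F)} 1 + 1 = -b_1(F)+1$, where the extra $+1$ is the single full turn picked up globally by a closed curve that passes once around the binding region (compare the way an unknot encircling $U$ acquires rotation number $\pm 1$), and the hypothesis that $\mathcal{L}_{F}$ is oriented compatibly with $U$ fixes the sign.

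The main obstacle, I expect, is keeping the signs and the half-integer contributions consistent: because $\mathcal{L}_{F}$ generically has several components (so ``$rot$'' is the sum of the rotation numbers of the components) and because the Seifert-framing trivialization used to define $rot$ must be compared carefully with the page trivialization of $\xi_{std}$, there is a genuine risk of an off-by-one or a global sign error. I would pin this down by checking the formula against the running example in Figure~\ref{fig:example}: there $F$ has $b_1(F)=4$ and $\mathcal{L}_{F}$ is the negative trefoil, for which the relevant Legendrian representative should have $rot = -3$, matching $-b_1(F)+1 = -3$; this, together with the compatibility $tb(\mathcal{L}_{F}) + |rot(\mathcal{L}_{F})| \le 2g(\partial F)-1 + \dots$ type sanity checks coming from the slice–Bennequin inequality, would confirm that the local contributions have been tallied with the correct signs.
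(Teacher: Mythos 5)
Your overall strategy (decompose $\mathcal{L}_{F}$ into the page arcs and the connecting arcs on $\partial(N(U))$ and sum local winding contributions) is the same kind of argument the paper makes, but the proposal stops short of the one step that actually constitutes the proof: the quantitative bookkeeping. You write ``I expect the bookkeeping to show'' $rot(\mathcal{L}_F)=-b_1(F)+1$, with the per-piece contributions only asserted, and the heuristics offered for them do not hold up. Concretely: (i) the ``global trivialization of $\xi_{std}$ over the complement of the binding coming from the pages'' is not well defined --- the line field cut out by the pages (or the characteristic foliations of the pages) is singular along the circle of elliptic points at the page centers, and even granting a trivialization over $\mathbf{S}^3\setminus U$, the winding of $T\mathcal{L}_F$ against it need not equal $rot$, which is defined against a trivialization extending over a Seifert surface (equivalently a global one); you flag this comparison as a ``risk'' but never carry it out, and this is exactly where meridional wrapping of the arcs $\alpha$ enters. (ii) The claimed local contributions are not justified and in fact disagree with a correct count. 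Your reason that each band contributes $-1$ (``a half-turn at each foot, both of the same sign because the two feet sit on opposite sides of the disk'') fails in general: the two feet of a band can be adjacent on $U$ (e.g.\ a subcode $(i,i)$, which this lemma does not exclude). In the paper's computation the band sides contribute only the small angle $\xi_i$ subtended by the band's feet along $U$ (Figure~\ref{fig:rotation2}), while each connecting arc $\alpha$ contributes $-2\pi+(\theta_{i_{k+1}}-\theta_{i_k})+\pi+\zeta_k$ (Figure~\ref{fig:rotation1}); the $-2\pi$ per band is carried by the two arcs on $\partial(N(U))$, which each wrap once around the meridian (Figure~\ref{fig:replacement}), not by turns at the band feet. (iii) Your ``extra $+1$ from a closed curve passing once around the binding'' is the wrong mechanism: $\mathcal{L}_F$ is in general a multi-component link whose individual components need not encircle $U$, and the $+2\pi$ actually arises from the identity $\sum_k\zeta_k+\sum_i\xi_i=2\pi$, i.e.\ the gap angles and foot angles together sweep $U$ exactly once.

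Finally, the proposed sanity check is circular: you take $rot=-3$ for the representative in Figure~\ref{fig:example} because that is what the lemma predicts; the self-linking/Bennequin-type bounds only give inequalities (here $tb+|rot|\leq\overline{sl}(\overline{T_{2,3}})=-5$, i.e.\ $|rot|\leq 3$), so they cannot confirm the exact value or resolve the off-by-one and sign issues you yourself identify. To close the gap you would need to do what the paper does: fix a legitimate trivialization, compute the winding of each arc $\alpha$ and each band passage explicitly in terms of the page angles $\theta_i$, the corner turns, and the angles $\zeta_k,\xi_i$ along $U$, and then observe that the telescoping sum gives $2\pi\, rot(\mathcal{L}_F)=-2n\pi+2\pi$.
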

\begin{proof}
The rotation number $rot (\mathcal{L})$ of an oriented Legendrian link $\mathcal{L}$ is the winding number of $T\mathcal{L}$ with respect to a trivialization of $\xi_{std}$ along $\mathcal{L}$. 
Let $W_{F}=(i_{1}, i_{2}, \dots, i_{2n})$ be the flat basket code of $F$, where $n=b_{1}(F)$. 
For convenience, define $i_{2n+1}=i_1$. 
Let $\alpha$ be the arc in $\mathcal{L}_{F}$ which connects $B_{i_{k}}$ and $B_{i_{k+1}}$ as in Figure~\ref{fig:rotation1}. 
Define $\zeta_{k}$ be the angle corresponding to the arc in $U$ which connects $B_{i_{k}}$ and $B_{i_{k+1}}$. 
Then, as in Figure~\ref{fig:rotation1}, the arc $\alpha$ contributes 
\[
-2\pi+(\theta_{i_{k+1}}-\theta_{i_{k}})+\frac{\pi}{2}\times 2+\zeta_{k}
\]
to $rot (\mathcal{L}_{F})$. 
Moreover, when we go across a band $B_{i}$ twice along $\mathcal{L}_{F}$, the winding number increases $\xi_{i}$, where $\xi_{i}$ is the angle corresponding to the two arcs in $B_{i}\cap U$ (see Figure~\ref{fig:rotation2}). 
Then, we obtain 
\begin{align*}
2\pi \times rot (\mathcal{L}_{F})
&=\sum_{k=1}^{2n}(-2\pi+(\theta_{i_{k+1}}-\theta_{i_{k}})+\frac{\pi}{2}\times 2+\zeta_{k})+\sum_{i=1}^{n}\xi_{i}\\
&=-2n\pi+\sum_{k=1}^{2n}\zeta_{k}+\sum_{i=1}^{n}\xi_{i}\\
&=-2n\pi+2\pi. 
\end{align*}
Hence, we have $rot (\mathcal{L}_{F})=-n+1=-b_{1}(F)+1$. 
\begin{figure}[h]
\begin{center}
\includegraphics[scale=0.5]{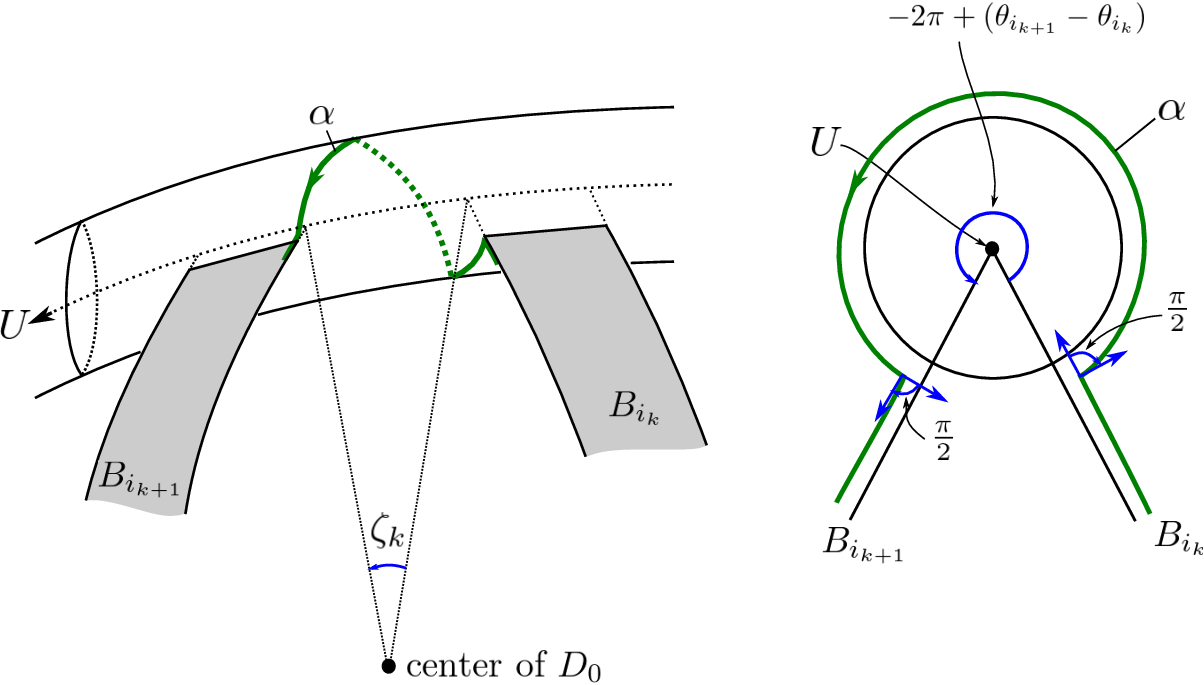}
\end{center}
\caption{}
\label{fig:rotation1}
\end{figure}
\begin{figure}[h]
\begin{center}
\includegraphics[scale=0.45]{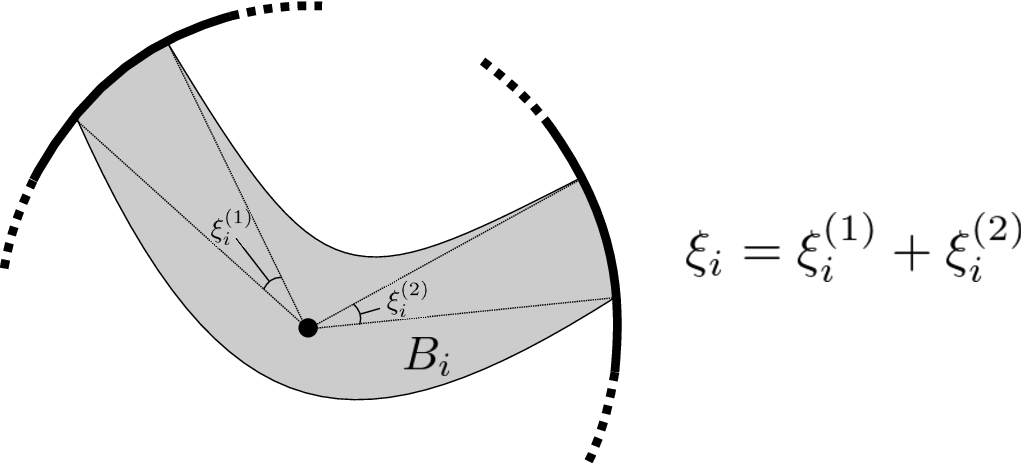}
\end{center}
\caption{}
\label{fig:rotation2}
\end{figure}
\end{proof}
%
\section{Maximal self-linking number and flat plumbing basket}\label{sec:self-linking}
Let $l$ be an oriented link in $\mathbf{S}^3$. 
Then, $l$ is a {\it transverse link} in $\xi_{std}$ if it is positively transverse to $\xi_{std}$. 
The {\it self-linking number} $sl(l)$ of $l$ is defined as the linking number $lk(l, l')$ of $l$ and $l'$, where $l'$ is a push-off of $l$ obtained by a non-zero vector field in $\xi_{std}$. 
It is known that for any Legendrian link $\mathcal{L}$, by pushing $\mathcal{L}$ in a sufficiently small annulus neighborhood, we can construct two transverse link $l_+$ and $l_-$ such that $l_{\pm}$ is isotopic to $\mathcal{L}$ topologically and 
\[
sl(l_{\pm})=tb(\mathcal{L})\mp rot(\mathcal{L}). 
\]
For example, see \cite{Geiges, OS}. 
\par
Let $F$ be a flat plumbing basket with $b_{1}(F)>0$ and $\partial F=L$. 
Let $\mathcal{L}_{F}$ be the Legendrian link associated with $F$. 
By Lemmas~\ref{lem:main} and \ref{lem:rotation}, we can construct a transverse link $l_{F}$ such that $l_{F}$ is isotopic to $L$ and 
\[
sl(l_F)=tb(\mathcal{L}_F)+ |rot(\mathcal{L}_F)|=-b_1(F)-1\in 2\mathbf{Z}+|L|. 
\]
Hence, we obtain the following. 
\begin{thm}[Theorem~\ref{thm:self-linking}]\label{thm:self-linking2}
Let $L$ be a non-trivial oriented link in $\mathbf{S}^3$. 
Define the maximal self-linking number $sl(L)$ of $L$ as 
\[
\overline{sl}(L):=\max\{tb(\mathcal{L})+|rot(\mathcal{L})|\mid \mathcal{L}\text{ is a Legendrian link in $\xi_{std}$ and isotopic to }L\}. 
\]
Then we have 
\[
\max\{-\overline{sl}(L), -\overline{sl}(\overline{L})\}-1\leq fpbk(L)=fpbk(\overline{L}), 
\]
where $\overline{L}$ is the mirror image of $L$.  
\end{thm}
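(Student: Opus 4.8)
\emph{Proof proposal.} The plan is to package the two computations already established in Section~\ref{sec:legendrian-fpb}, namely Lemmas~\ref{lem:main} and \ref{lem:rotation}, and apply them to a flat plumbing basket of minimal complexity. First I fix an oriented link $L$ and choose a flat plumbing basket presentation $F$ of $L$ realizing the minimum, so that $b_{1}(F)=fpbk(L)$; such an $F$ exists by the definition of $fpbk$. Since $L$ is non-trivial it is not the unknot, so $F$ is not a disk and $b_{1}(F)>0$, which is precisely the hypothesis under which the constructions and lemmas of Section~\ref{sec:legendrian-fpb} apply.

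Next I form the associated Legendrian link $\mathcal{L}_{F}$, equip it with the orientation agreeing with that of the binding $U$, and read off $tb(\mathcal{L}_{F})=-2b_{1}(F)$ from Lemma~\ref{lem:main} and $rot(\mathcal{L}_{F})=-b_{1}(F)+1$ from Lemma~\ref{lem:rotation}. Since $b_{1}(F)\geq 1$ we have $|rot(\mathcal{L}_{F})|=b_{1}(F)-1$, and hence
\[
tb(\mathcal{L}_{F})+|rot(\mathcal{L}_{F})|=-2b_{1}(F)+\bigl(b_{1}(F)-1\bigr)=-b_{1}(F)-1=-fpbk(L)-1.
\]
Equivalently, pushing $\mathcal{L}_{F}$ off in the appropriate transverse direction produces a transverse representative $l_{F}$ of $L$ with $sl(l_{F})=-fpbk(L)-1$. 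By the very definition of $\overline{sl}(L)$ as the maximum of $tb(\mathcal{L})+|rot(\mathcal{L})|$ over Legendrian representatives $\mathcal{L}$ of $L$, this gives $\overline{sl}(L)\geq -fpbk(L)-1$, that is, $-\overline{sl}(L)-1\leq fpbk(L)$.

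Finally I run the identical argument with $L$ replaced by its mirror image $\overline{L}$. A minimal flat plumbing basket presentation of $\overline{L}$ has the same first Betti number as one of $L$, since mirroring a flat plumbing basket presentation is again a flat plumbing basket presentation, so $fpbk(\overline{L})=fpbk(L)$; thus $-\overline{sl}(\overline{L})-1\leq fpbk(\overline{L})=fpbk(L)$. Taking the maximum of the two lower bounds yields $\max\{-\overline{sl}(L),\,-\overline{sl}(\overline{L})\}-1\leq fpbk(L)=fpbk(\overline{L})$, as claimed.

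There is essentially no serious obstacle here, since the statement is a repackaging of Lemmas~\ref{lem:main} and \ref{lem:rotation}; the only points requiring a little care are (i) checking that non-triviality of $L$ forces $b_{1}(F)>0$, so that the lemmas are applicable to a minimal $F$, and (ii) the bookkeeping of signs and orientations, namely that reversing the orientation of $\mathcal{L}_{F}$ sends $rot$ to $-rot$ while leaving $tb$ unchanged, so that $tb+|rot|$ (and hence the bound) is orientation-independent and consistent with the standard relation $sl(l_{\pm})=tb(\mathcal{L})\mp rot(\mathcal{L})$ between a Legendrian link and its two transverse push-offs.
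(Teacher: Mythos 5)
Your proposal is correct and follows essentially the same route as the paper: take a minimal flat plumbing basket $F$ (noting $b_{1}(F)>0$ by non-triviality), apply Lemmas~\ref{lem:main} and \ref{lem:rotation} to get $tb(\mathcal{L}_{F})+|rot(\mathcal{L}_{F})|=-b_{1}(F)-1$, conclude $-\overline{sl}(L)-1\leq fpbk(L)$ from the definition of $\overline{sl}$ (equivalently via the transverse push-off $l_{F}$), and repeat for the mirror using $fpbk(\overline{L})=fpbk(L)$. No gaps; the orientation remarks you add are consistent with the paper's hypotheses in Lemma~\ref{lem:rotation}.
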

\begin{cor}\label{cor:torus}
For any $p\geq q>1$, we have 
\[
pq-p+q-1=-\overline{sl}(T_{p,-q})-1= fpbk(T_{p,q}), 
\]
where $T_{p,q}$ is the positive $(p,q)$-torus link and 
$T_{p,-q}=\overline{T_{p,q}}$. 
\end{cor}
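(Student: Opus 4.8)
The plan is to bound $fpbk(T_{p,q})$ above and below by $m:=pq-p+q-1=(p+1)(q-1)$, and to pin down $\overline{sl}(T_{p,-q})$ along the way; throughout $p\ge q>1$, so $T_{p,q}$ is a non-trivial link and $T_{p,-q}=\overline{T_{p,q}}$, and in particular Theorem~\ref{thm:self-linking2} applies.

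\emph{Upper bound.} First I would exhibit an explicit flat plumbing basket presentation $F$ of $T_{p,q}$ with $b_1(F)=m$. A convenient starting point is the standard positive braid word $(\sigma_1\cdots\sigma_{q-1})^{p}$ on $q$ strands whose closure is $T_{p,q}$; running the flattening procedure of \cite{FHK} (see also \cite{Kim-flat}) produces a flat plumbing basket from it, and one keeps track of the number of bands by induction on $p$, handling one syllable $\sigma_1\cdots\sigma_{q-1}$ at a time. The concrete step is to write down the resulting flat basket code and verify that its boundary is $T_{p,q}$ and that it uses exactly $m$ bands; mirroring the code then gives a flat plumbing basket of $T_{p,-q}$ with $m$ bands as well, so $fpbk(T_{p,q})=fpbk(T_{p,-q})\le m$.

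\emph{The self-linking number.} I would then show $\overline{sl}(T_{p,-q})=-m-1$. The inequality $\overline{sl}(T_{p,-q})\ge -m-1$ is immediate from the construction: by Lemmas~\ref{lem:main} and \ref{lem:rotation}, the Legendrian link $\mathcal{L}_F$ associated with a flat plumbing basket $F$ of $T_{p,-q}$ with $m$ bands has $tb(\mathcal{L}_F)=-2m$ and $|rot(\mathcal{L}_F)|=m-1$, so its transverse push-off $l_F$ is a transverse representative of $T_{p,-q}$ with $sl(l_F)=tb(\mathcal{L}_F)+|rot(\mathcal{L}_F)|=-m-1$. For the reverse inequality I would invoke the HOMFLY polynomial bound on the maximal self-linking number (the Morton--Franks--Williams inequality); since $P_{T_{p,-q}}(v,z)=P_{T_{p,q}}(v^{-1},z)$, applying it to $T_{p,-q}$ reduces the problem to the extremal $v$-degree of $P_{T_{p,q}}(v,z)$, which is classically equal to $m$ (this is the explicit HOMFLY polynomial of torus links, equivalently the sharpness of the Morton--Franks--Williams inequality for them). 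Hence $\overline{sl}(T_{p,-q})=-m-1$.

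\emph{Conclusion.} Plugging $\overline{sl}(T_{p,-q})=-m-1$ into Theorem~\ref{thm:self-linking2} gives $m=-\overline{sl}(T_{p,-q})-1\le fpbk(T_{p,q})$, and together with the construction this yields $fpbk(T_{p,q})=m=-\overline{sl}(T_{p,-q})-1$, which is the assertion. The main obstacle is the sharp upper bound $\overline{sl}(T_{p,-q})\le -m-1$: the elementary Bennequin inequality coming from a Seifert surface of $T_{p,-q}$ is far too weak here (it gives only $\overline{sl}(T_{p,-q})\le (p-1)(q-1)-1$), and the slice-genus and $s$-invariant versions are still not sharp, so one is forced to pass to the HOMFLY polynomial and the precise top $v$-degree of $P_{T_{p,q}}$, and to check that the argument persists in the genuine link case $\gcd(p,q)>1$. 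When $\gcd(p,q)=1$ one may instead quote the classification of Legendrian torus knots of Etnyre--Honda to obtain $\overline{sl}(T_{p,-q})$ directly.
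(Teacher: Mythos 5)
Your proposal is correct and takes essentially the same route as the paper: the sharp lower bound comes from the MFW/HOMFLY self-linking bound applied to $T_{p,-q}$ together with the known extremal $v$-degree for torus links (the paper packages this same input as the two-sided MFW inequality, Franks--Williams sharpness of the braid index, and Bennequin's $\overline{sl}(T_{p,q})=pq-p-q$), fed through Theorem~\ref{thm:self-linking2}. For the upper bound the paper simply cites \cite[Theorem~2.4]{FHK} for a flat plumbing basket of $T_{p,q}$ with $pq-p+q-1$ bands rather than re-running the flattening algorithm as you sketch, but the content is the same.
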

\begin{proof}
By Morton-Franks-Williams (MFW) inequality \cite{F-W, Morton}, we have 
\[
-2b(L)\leq \overline{sl}(L)+\overline{sl}(\overline{L})\leq -\operatorname{breadth}_{v}P_{L}(v,z)-2, 
\]
where $b(L)$ is the braid index of $L$ and $P_{L}(v,z)$ is the HOMFLYPT polynomial. 
Franks and Williams \cite{F-W} proved that for any torus link, MFW inequality is sharp, that is $2q=\operatorname{breadth}_{v}P_{T_{p,q}}(v,z)+2$. 
Moreover, it is known that $\overline{sl}(T_{p,q})=pq-p-q$ (\cite{Bennequin}). 
Hence, we have 
\[
-\overline{sl}(T_{p,-q})\geq \overline{sl}(T_{p,q})+\operatorname{breadth}_{v}P_{T_{p,q}}(v,z)+2=pq-p+q. 
\]
On the other hand, by \cite[Theorem~2.4]{FHK}, we see that $T_{p,q}$ has a flat plumbing basket presentation $F$ with $b_1(F)=pq-p+q-1$. 
By Theorem~\ref{thm:self-linking2}, we have 
\[
-\overline{sl}(T_{p,-q})\leq fpbk(T_{p,q})+1\leq pq-p+q, 
\]
and we finish the proof. 
\end{proof}
\begin{cor}
Let $L$ be an oriented link with $\overline{sl}(L)=-\chi(L)$, where $\chi(L)$ is the maximal Euler characteristic of $L$ (for example, if $L$ is strongly quasipositive, $L$ satisfies this condition). 
Then we have 
\[
1-\chi(L)+\operatorname{breadth}_{v}P_{L}(v,z)\leq fpbk(L).
\]
\end{cor}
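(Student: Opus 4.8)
The plan is to obtain this as a short arithmetic consequence of Theorem~\ref{thm:self-linking2} together with the Morton--Franks--Williams (MFW) inequality, exactly in the spirit of the proof of Corollary~\ref{cor:torus}; in fact the torus links $T_{p,q}$ with $p\ge q>1$ are the model case, being strongly quasipositive with $\overline{sl}(T_{p,q})=pq-p-q=-\chi(T_{p,q})$. The underlying idea is that the hypothesis $\overline{sl}(L)=-\chi(L)$ fixes the maximal self-linking number of $L$ itself, and MFW then converts this into a lower bound for $-\overline{sl}(\overline{L})$, which in turn is bounded above by $fpbk(L)+1$ thanks to Theorem~\ref{thm:self-linking2}.

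Concretely I would argue as follows. We may assume $L$ is non-trivial, so that Theorem~\ref{thm:self-linking2} applies; for $L$ an unlink the two sides are easily seen to coincide. Recall the MFW inequality as used in the proof of Corollary~\ref{cor:torus},
\[
\overline{sl}(L)+\overline{sl}(\overline{L})\leq -\operatorname{breadth}_{v}P_{L}(v,z)-2,
\]
(see \cite{F-W, Morton}), which rearranges to
\[
-\overline{sl}(\overline{L})\geq \overline{sl}(L)+\operatorname{breadth}_{v}P_{L}(v,z)+2.
\]
Substituting the hypothesis $\overline{sl}(L)=-\chi(L)$ gives $-\overline{sl}(\overline{L})\geq -\chi(L)+\operatorname{breadth}_{v}P_{L}(v,z)+2$. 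Since $\max\{-\overline{sl}(L),-\overline{sl}(\overline{L})\}\geq -\overline{sl}(\overline{L})$, Theorem~\ref{thm:self-linking2} now yields
\[
fpbk(L)\geq -\overline{sl}(\overline{L})-1\geq 1-\chi(L)+\operatorname{breadth}_{v}P_{L}(v,z),
\]
which is the desired inequality.

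There is essentially no obstacle here: all the substantive content is already packaged in Theorem~\ref{thm:self-linking2} (hence in Lemmas~\ref{lem:main} and~\ref{lem:rotation}) and in the classical MFW inequality, so the proof is a one-line combination. The only items deserving a word of comment are the verification that the hypotheses of Theorem~\ref{thm:self-linking2} hold --- only non-triviality is required, and the unlink case is checked directly --- and the parenthetical claim that strongly quasipositive links satisfy $\overline{sl}(L)=-\chi(L)$, which is a well-known Bennequin-type equality for such links, their maximal self-linking number being realized on a braided Seifert surface of maximal Euler characteristic. Neither of these disrupts the main line of reasoning.
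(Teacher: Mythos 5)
Your proposal is correct and is essentially identical to the paper's proof: the author likewise chains the MFW inequality $\overline{sl}(L)+\operatorname{breadth}_{v}P_{L}(v,z)+2\leq -\overline{sl}(\overline{L})$ with the bound $-\overline{sl}(\overline{L})\leq fpbk(L)+1$ from Theorem~\ref{thm:self-linking2} and then substitutes $\overline{sl}(L)=-\chi(L)$. Your extra remarks on the trivial case and on strongly quasipositive links are harmless additions to the same argument.
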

\begin{proof}
By MFW inequality and Theorem~\ref{thm:self-linking2}, we have 
\[
\overline{sl}(L)+\operatorname{breadth}_{v}P_{L}(v,z)+2 \leq -\overline{sl}(\overline{L})\leq fpbk(L)+1. 
\]
By the assumption, we finish the proof. 
\end{proof}
\begin{cor}\label{cor:self-linking-homfly}
Let $L$ be an oriented link. Then, we have 
\[
\operatorname{maxdeg}_{v}P_{L}(v,z)\leq -\overline{sl}(L)-1\leq fpbk(L). 
\]
\end{cor}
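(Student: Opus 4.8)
The plan is to obtain the two inequalities from ingredients that are already in place. The second inequality $-\overline{sl}(L)-1\leq fpbk(L)$ is precisely the special case of Theorem~\ref{thm:self-linking2} obtained by discarding the term $-\overline{sl}(\overline{L})$ from the maximum (since $-\overline{sl}(L)\leq\max\{-\overline{sl}(L),-\overline{sl}(\overline{L})\}$). Theorem~\ref{thm:self-linking2} is stated only for non-trivial $L$, but the sole excluded case is the unknot, where $\overline{sl}=-1$ and $fpbk=0$ so the inequality still holds, and the case of a split unlink is equally elementary. Hence the only real content is the first inequality $\operatorname{maxdeg}_{v}P_{L}(v,z)\leq -\overline{sl}(L)-1$.

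For that inequality I would invoke the Morton--Franks--Williams (Bennequin-type) bound. Recall that $\overline{sl}(L)$ equals the supremum of the self-linking numbers of the transverse representatives of $L$, that by Bennequin's theorem every such representative is transversely isotopic to a braid closure, and that the closure $\widehat{\beta}$ of a braid $\beta$ on $n$ strands with exponent sum $e$ has $sl(\widehat{\beta})=e-n$; thus $\overline{sl}(L)$ is the supremum of $e(\beta)-n(\beta)$ over all braids $\beta$ whose closure is $L$. The Morton--Franks--Williams inequality \cite{F-W,Morton} then gives, in the sign convention used in Section~\ref{sec:self-linking}, $\operatorname{maxdeg}_{v}P_{L}(v,z)\leq -(e(\beta)-n(\beta))-1$ for every such $\beta$; taking the supremum over $\beta$ yields $\operatorname{maxdeg}_{v}P_{L}(v,z)\leq -\overline{sl}(L)-1$. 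This is just one half of the version of MFW already used in the proof of Corollary~\ref{cor:torus}, where it was applied to $L$ and $\overline{L}$ at once to produce the breadth inequality $\overline{sl}(L)+\overline{sl}(\overline{L})\leq -\operatorname{breadth}_{v}P_{L}(v,z)-2$. Concatenating the two inequalities proves the corollary.

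I do not anticipate a genuine obstacle: the statement is a formal consequence of Theorem~\ref{thm:self-linking2} together with a classical inequality. The one place requiring care is the bookkeeping of the sign and mirror conventions for $P_{L}(v,z)$ and for $sl$, so that $\operatorname{maxdeg}_{v}$ is matched with $-\overline{sl}(L)$ (and not with $-\overline{sl}(\overline{L})$); once the conventions of Section~\ref{sec:self-linking} are used consistently this is automatic, and the degenerate cases noted above are dispatched by direct computation.
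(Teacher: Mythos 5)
Your proposal is correct and follows essentially the same route as the paper: the first inequality is exactly the HOMFLYPT (Morton--Franks--Williams) bound on the self-linking number, and the second is Theorem~\ref{thm:self-linking2} with the mirror term discarded. Your extra attention to the trivial/unknotted cases and the braid-closure formulation of $\overline{sl}$ just fills in details the paper leaves implicit.
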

\begin{proof}
The first inequality is the HOMFLYPT bound on the self-linking number, which follows from MFW inequality. 
The second follows from Theorem~\ref{thm:self-linking2}. 
\end{proof}
\begin{cor}\label{cor:twist-knot}
Let $K_{m}$ be the $m$-twist knot (Figure~\ref{fig:twist-knot}). 
Then, for any $k\geq 0$, we have 
$2k\leq fpbk(K_{2k})$ and $2k+4\leq fpbk(K_{2k+1})$. 
\end{cor}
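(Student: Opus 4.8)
The plan is to apply Corollary \ref{cor:self-linking-homfly} to the twist knots $K_m$, so everything reduces to computing (or bounding below) $\operatorname{maxdeg}_v P_{K_m}(v,z)$, together with the constraint $fpbk(K_m)\in 2\mathbf{Z}$ coming from the parity remark in Section \ref{sec:preliminary} (for a knot, $|L|=1$, so $fpbk$ is even). First I would recall that the twist knots $K_m$ fall into two families up to mirror image: the even-index ones $K_{2k}$ include the unknot at $k=0$ and, more relevantly, $K_2$ is the figure-eight and the $K_{2k}$ are amphichiral-ish in the sense that their HOMFLYPT $v$-breadth grows slowly, whereas the odd-index ones $K_{2k+1}$ are (up to mirror) the $(2,2k+1)$-torus-like twist knots — in particular $K_1$ is the trefoil and $K_3$ is $5_2$, and for these the HOMFLYPT polynomial has larger $v$-span because one of the two mirror forms is strongly quasipositive.

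The key steps, in order: (1) Write down the standard skein-theoretic recursion for $P_{K_m}(v,z)$ in terms of $P_{K_{m-2}}$ coming from changing a crossing in the twist region; this is the classical computation and I would simply cite it or carry it out to extract the top $v$-degree. (2) From that recursion read off $\operatorname{maxdeg}_v P_{K_{2k}}(v,z) = 2k$ and $\operatorname{maxdeg}_v P_{K_{2k+1}}(v,z) = 2k+4$ (after choosing the orientation/mirror that maximizes it, which is legitimate since $fpbk$ is mirror-invariant by the remark in Section \ref{sec:preliminary} and Theorem \ref{thm:self-linking2}). (3) Apply Corollary \ref{cor:self-linking-homfly}: $\operatorname{maxdeg}_v P_{K_m}(v,z)\le fpbk(K_m)$. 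This already gives $2k\le fpbk(K_{2k})$ directly, and gives $2k+4 \le fpbk(K_{2k+1})$; in the odd case, since $2k+4$ is already even, no parity correction is needed. (Note that for $K_{2k}$, if the HOMFLYPT bound only produced an odd number I would round up using $fpbk\in 2\mathbf{Z}$, but here $2k$ is already even.)

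The main obstacle will be step (2): pinning down the exact top $v$-degree of the HOMFLYPT polynomial of a twist knot, and in particular being careful about which mirror to use. One has to verify that the ``$+4$'' in the odd case genuinely appears — this is where the distinction between the two families lives, and it reflects the fact that $K_{2k+1}$ (in the right mirror) is strongly quasipositive with $\overline{sl} = -\chi = 2g-1$ while $K_{2k}$ is not, so its genus-based lower bound is weaker and the HOMFLYPT $v$-breadth is correspondingly smaller. I expect the cleanest route is to invoke the known formula for $P_{K_m}$ (e.g. via the Conway/Alexander–type recursion, since twist knots are two-bridge) rather than re-deriving it, and then quote $\operatorname{maxdeg}_v$ off that formula; the rest is immediate from Corollary \ref{cor:self-linking-homfly}.
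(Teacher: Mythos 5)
Your proposal is correct, but it takes a genuinely different route from the paper. The paper's proof is a one-line application of Theorem~\ref{thm:self-linking2} after citing \cite[Theorem~1.2]{ENV} (the Legendrian/transverse classification of twist knots), which gives $\max\{-\overline{sl}(K_{2k}),-\overline{sl}(\overline{K_{2k}})\}\geq 2k+1$ and $\max\{-\overline{sl}(K_{2k+1}),-\overline{sl}(\overline{K_{2k+1}})\}\geq 2k+5$. You instead go through Corollary~\ref{cor:self-linking-homfly}, i.e.\ the Morton--Franks--Williams/HOMFLYPT bound, so the whole burden shifts to the $v$-degree of $P_{K_m}$ --- the step you flag as the main obstacle. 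That step does work: using the skein relation at a twist-region crossing (whose crossing change gives $K_{m-2}$ and whose oriented smoothing is the Hopf link formed by the clasp), one gets a two-term recursion $P_{K_m}=v^{\pm2}P_{K_{m-2}}\pm v^{\pm1}zP_{H_{\pm}}$, from which the extreme $v$-degree (taken over the two mirrors) is exactly $2k$ for $K_{2k}$ and $2k+4$ for $K_{2k+1}$; since both targets are even, no parity adjustment is needed, and your invocation of mirror-invariance of $fpbk$ to choose the better mirror is legitimate. In other words, the Morton bound happens to be sharp for twist knots, so your polynomial route recovers precisely the paper's bounds. What each approach buys: yours is more elementary and self-contained (a classical skein computation, no contact-geometric classification needed), while the paper's is shorter and conceptually tied to the transverse classification; note also that your route silently reproves the relevant half of the ENV estimate via MFW. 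Two small cautions: your degree claims are asserted rather than derived (they are correct, but a complete proof must either carry out the recursion above or cite an explicit formula for $P_{K_m}$), and some of the surrounding heuristics are off --- e.g.\ the odd twist knots $K_{2k+1}$ are not torus knots for $k\geq1$ (only $K_1$ is), though the strong quasipositivity of one mirror, which is what actually drives the ``$+4$'', is correctly identified.
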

\begin{proof}
By \cite[Theorem~1.2]{ENV}, we see that 
\begin{itemize}
\item $\max\{-\overline{sl}(K_{2k}), -\overline{sl}(\overline{K_{2k}})\}\geq 2k+1$, 
\item $\max\{-\overline{sl}(K_{2k+1}), -\overline{sl}(\overline{K_{2k+1}})\}\geq 2k+5$. 
\end{itemize}
By Theorem~\ref{thm:self-linking2}, we finish the proof. 
\end{proof}
\begin{rem}
Mikami Hirasawa showed that $fpbk(K_{2k+1})\leq 2k+4$ for $k\geq 0$, and $fpbk(K_{2k})\leq 2k$ for $k\geq 3$ in his forthcoming paper. 
Hence, by Corollary~\ref{cor:twist-knot}, we have 
\begin{itemize}
\item $ fpbk(K_{2k+1})=2k+4$ for $k\geq 0$, 
\item $fpbk(K_{2k})=2k$ for $k\geq 3$, 
\item $fpbk(K_{2})=4$ and $fpbk(K_{4})=6$. 
\end{itemize}
\end{rem}
\begin{figure}[h]
\begin{center}
\includegraphics[scale=0.8]{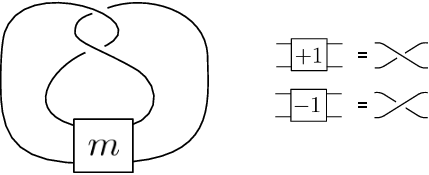}
\end{center}
\caption{The $m$-twist knot $K_{m}$ }
\label{fig:twist-knot}
\end{figure}
\begin{example}\label{ex:trefoil1}
By Corollary~\ref{cor:torus}, we have 
\[
fpbk(3_1)=4=\operatorname{maxdeg}_{v}P_{3_1}(v,z). 
\]
Moreover, since the right-hand side is additive and $fpbk$ is subadditive under connected sum, by Corollary~\ref{cor:self-linking-homfly}, we have 
\[
fpbk(\sharp_{n}3_1)=4n. 
\]
\end{example}
%
\section{Maximal Thurston-Bennequin number and flat plumbing basket}\label{sec:thurston-bennequin}
Let $L$ be an oriented link in $\mathbf{S}^3$. The {\it maximal Thurston-Bennequin number} $\overline{tb}(L)$ of $L$ is the maximal number of Thurston-Bennequin numbers of Legendrian links in $\xi_{std}$ which are isotopic to $L$. 
In this section, we compare $tb(\mathcal{L}_{F})$ with $\overline{tb}(L)$, where $L=\partial F$. 
\par
Let $\alpha$ be an arc used in the construction of $\mathcal{L}_{F}$. 
Suppose that $\alpha$ goes round the meridian of $\partial (N(U))$. 
See the left picture of Figure~\ref{fig:shortcut}. 
If we increase the radius of $N(U)$, the angle between the characteristic foliation and the meridian also increases as in Figure~\ref{fig:foliation2}. 
Hence, by increasing the radius of $N(U)$ locally, we can obtain a new Legendrian arc $\alpha'$ instead of $\alpha$ (see Figure~\ref{fig:shortcut}). 
Then, we can construct a new Legendrian link by replacing $\alpha$ in $\mathcal{L}_{F}$ with $\alpha'$ as in Figure~\ref{fig:shortcut}. 
We call this operation a {\it shortcut} for $\mathcal{L}_{F}$. 
By considering the Lagrangian projection, we see that a shortcut corresponds to a (de)stabilization in a Lagrangian projection of $\mathcal{L}_{F}$. 
\begin{lem}\label{lem:shortcut1}
Let $F$ be a flat plumbing basket. 
Let $\mathcal{L}_{F}'$ be a Legendrian link obtained from $\mathcal{L}_{F}$ by taking one shortcut. 
Then, we obtain $tb(\mathcal{L}_{F}')=tb(\mathcal{L}_{F})+1$. 
\end{lem}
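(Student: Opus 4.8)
The plan is to compute $tb(\mathcal{L}_F')$ via the linking-number formula $tb(\mathcal{L}_F') = lk(\mathcal{L}_F', (\mathcal{L}_F')^+)$ used in the proof of Lemma~\ref{lem:main}, and to track exactly how this linking number changes when a single shortcut is applied. Since a shortcut is a purely local modification supported in a small ball near one arc $\alpha$ of $\mathcal{L}_F\cap N(U)$ (the portion where $\alpha$ winds once around the meridian of $\partial(N(U))$), and since the rest of $\mathcal{L}_F$ and its transverse push-off $\mathcal{L}_F^+$ are unchanged, it suffices to show that this local replacement of $\alpha$ by $\alpha'$ changes $lk(\mathcal{L}_F, \mathcal{L}_F^+)$ by exactly $+1$.

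First I would recall, as noted in the paragraph preceding the lemma, that a shortcut corresponds to a destabilization in the Lagrangian (front-equivalently, Legendrian) picture of $\mathcal{L}_F$. This is the conceptual heart of the argument: a single Legendrian stabilization changes $tb$ by $-1$ (and its inverse, a destabilization, by $+1$), a completely standard fact about Legendrian links that I may cite (e.g.\ \cite{Geiges, OS}). Thus the only thing to verify is that the shortcut move, as drawn in Figure~\ref{fig:shortcut}, is genuinely a (positive) destabilization and not a stabilization of the opposite sign or some other Reidemeister-type move. I would do this by examining the Lagrangian projection of the arc $\alpha$: winding once around the meridian of $\partial(N(U))$ produces precisely one extra loop (a ``kink''), and enlarging the radius of $N(U)$ locally — which is possible because, by Figure~\ref{fig:foliation2}, the angle $\phi_r$ between the characteristic foliation and the meridian grows with $r$ — removes that loop, i.e.\ smooths it out into the shorter Legendrian arc $\alpha'$. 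The orientation/sign of the loop is forced by the geometry of the characteristic foliation of $\partial(N(U))$ together with the fixed orientation of the binding $U$, and one reads off that it is the sign contributing $+1$ to $tb$.

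Alternatively — and this is the version I would actually write out, to keep everything within the linking-number framework already set up — I would argue directly: pushing $\mathcal{L}_F$ off to $\mathcal{L}_F^+$ along a field transverse to $\xi_{std}$, the arc $\alpha$ winding once around the meridian contributes a certain fixed amount of linking with $\mathcal{L}_F^+$ coming from that meridional winding (this is visible from the characteristic foliation picture of $\partial(N(U))$ in Figure~\ref{fig:foliation2}); after the shortcut, $\alpha'$ no longer winds around the meridian, so that contribution is removed. A careful local count — analogous to the ``$-4/2=-2$ per band'' count in Figure~\ref{fig:contribution} — shows the net change in $lk(\mathcal{L}_F,\mathcal{L}_F^+)$ is exactly $+1$, and since $|\alpha'\cap D_0|$ can still be arranged to be odd so that the resulting curve is still a well-defined Legendrian link isotopic (topologically) to $L$, we conclude $tb(\mathcal{L}_F') = tb(\mathcal{L}_F)+1$.

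The main obstacle I anticipate is making the local sign computation rigorous and unambiguous: one must be careful that the shortcut, which is defined via enlarging $N(U)$, does not inadvertently change the topological link type (the excerpt asserts it does not, so I may lean on that) and, more delicately, that the winding it removes is the $+1$-kink and not the $-1$-kink. Pinning down this sign is essentially a matter of carefully reading Figures~\ref{fig:foliation2} and~\ref{fig:shortcut} against the fixed coorientation of $\xi_{std}$ and the fixed orientation of the positive fibration direction $\{D_\theta\}$; once the sign convention is fixed consistently with the conventions in the proof of Lemma~\ref{lem:main}, the rest is a routine local picture-chase.
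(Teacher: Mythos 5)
Your proposal is correct and takes essentially the same route as the paper: the paper's proof is just the local crossing count of Figure~\ref{fig:shortcut2}, where replacing $\alpha$ by $\alpha'$ changes the local crossing between $\mathcal{L}_F$ and its push-off $\mathcal{L}_F^+$ from sign $-1$ to $+1$, so $lk(\mathcal{L}_F,\mathcal{L}_F^+)$ increases by $(+1-(-1))/2=+1$, which is exactly the direct linking-number computation you say you would write out (the destabilization remark you lead with is also how the paper motivates the move, but neither you nor the paper relies on it).
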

\begin{proof}
See Figure~\ref{fig:shortcut2}. 
\end{proof}
\begin{figure}[h]
\begin{center}
\includegraphics[scale=0.7]{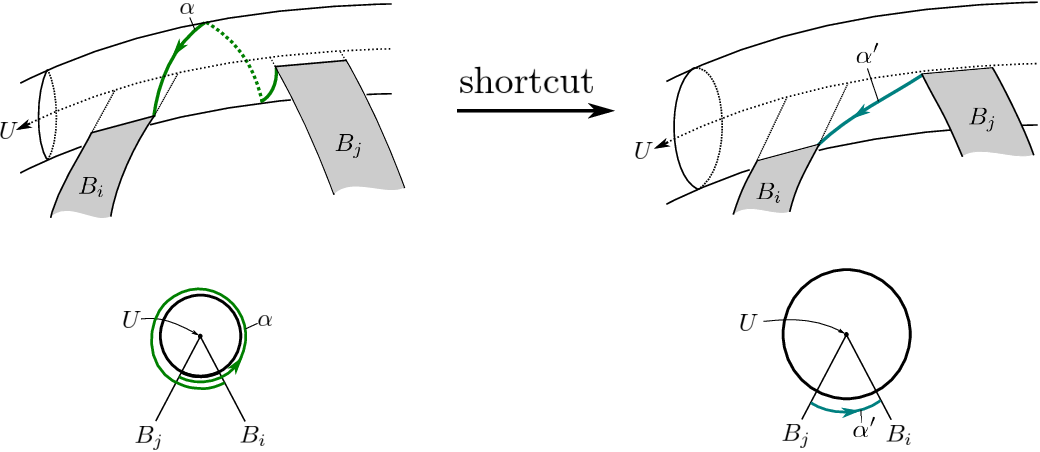}
\end{center}
\caption{Shortcut operation. In this picture, $j>i$, that is, $\theta_{j}>\theta_{i}$ (cf. Figure~\ref{fig:replacement}). In this case, we can take a shortcut. Namely we can replace $\alpha$ with $\alpha'$.}
\label{fig:shortcut}
\end{figure}
\begin{figure}[h]
\begin{center}
\includegraphics[scale=0.79]{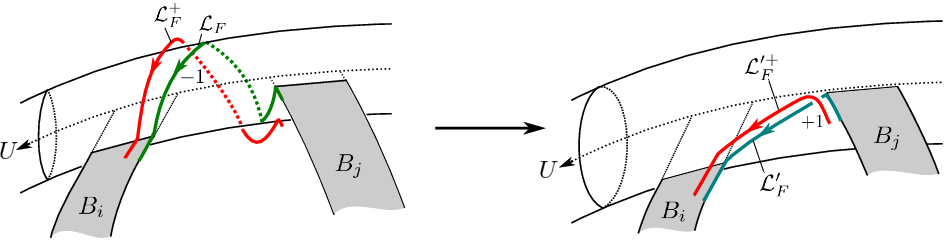}
\end{center}
\caption{One shortcut contributes $(+1-(-1))/2=+1$ to the Thurston-Bennequin number. }
\label{fig:shortcut2}
\end{figure}
\begin{lem}\label{lem:shortcut2}
Let $F$ be a flat plumbing basket with $b_{1}(F)=n\geq 2$. 
Suppose that $\partial F$ has no split component which is isotopic to the unknot. 
Then, we can take at least two shortcuts for $\mathcal{L}_{F}$. 
\end{lem}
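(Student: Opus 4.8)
The plan is to show that a flat plumbing basket with $b_1(F)=n\ge 2$ and with $\partial F$ having no split unknotted component must contain at least two arcs $\alpha$ of the type depicted in the left picture of Figure~\ref{fig:shortcut}, i.e. arcs along which a shortcut can be performed. Recall from the construction that for each letter $i$ appearing in the flat basket code $W_F=(i_1,\dots,i_{2n})$ there are two occurrences, and at each place where the code passes from band $B_{i_k}$ to band $B_{i_{k+1}}$ there is an arc $\alpha$ of $\mathcal{L}_F$ running through $N(U)$; whether that $\alpha$ admits a shortcut is governed purely by the cyclic order of the labels $\theta_{i_k}$ and $\theta_{i_{k+1}}$ along the two leaves the arc follows — concretely, as in Figure~\ref{fig:shortcut}, a shortcut is available exactly when the arc "goes the long way around" the meridian, which is a combinatorial condition on consecutive letters in $W_F$.

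First I would set up the combinatorial bookkeeping: reading $W_F$ cyclically, at each of the $2n$ steps $i_k\to i_{k+1}$ record whether the associated arc $\alpha_k$ is "shortcuttable" (type in Figure~\ref{fig:shortcut}) or not. The key is to count, over all $2n$ steps, how many are shortcuttable, and show this count is at least $2$. I expect the cleanest route is an Euler-characteristic / disk-counting argument: the complement of the flat basket code word on the circle $U=\partial D_0$ decomposes $D_0$ into regions, and the local picture of $\mathcal{L}_F$ near $U$ is determined by this planar data together with the heights $\theta_i$. One should be able to argue that if \emph{no} step were shortcuttable then the cyclic word $W_F$ would have to be of a very degenerate form — essentially forcing a band whose two feet are adjacent in $W_F$ with nothing between them, which is a "trivial" band that either can be removed (contradicting minimality-type reasoning is not needed here, but it does force $\partial F$ to split off an unknot summand) or makes $\partial F$ contain a split unknotted component. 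Similarly, if exactly one step were shortcuttable, a parity or connectivity obstruction should again force such a degeneration. So the argument is: (no shortcut) $\Rightarrow$ split unknot component, and (exactly one shortcut) $\Rightarrow$ split unknot component; contrapositively, the hypothesis yields at least two.

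More precisely, I would proceed as follows. Step 1: observe that a step $i_k\to i_{k+1}$ in $W_F$ fails to be shortcuttable only in a restricted configuration of the four relevant heights (the two feet heights of the arc's endpoints together with the ambient $\theta_0=0$ page), and translate this into: the sub-word structure around that step is "monotone" in a suitable sense. Step 2: show that if every step is non-shortcuttable then the heights $\theta_1<\dots<\theta_n$ together with $W_F$ force the word to be, up to cyclic rotation and relabeling, $(1,2,\dots,n,1,2,\dots,n)$-like or a concatenation of such blocks — and analyze $\partial F$ of such a basket directly, exhibiting a split unknot summand (this uses that $n\ge 2$ so there genuinely is such structure; for $n=1$ the statement is vacuous anyway since one cannot take two shortcuts, but $\partial F$ with $n=1$ is an unknot, excluded by hypothesis). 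Step 3: run the same analysis allowing exactly one exceptional shortcuttable step, and show the remaining $2n-1$ monotone steps still force the degenerate word, hence again a split unknot. Step 4: conclude that under the stated hypotheses at least two of the $2n$ arcs admit a shortcut, and these shortcuts can be performed independently (they occur in disjoint local pieces of $N(U)$, so Lemma~\ref{lem:shortcut1} applies to each).

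The main obstacle I anticipate is Step 2–3, namely giving a clean, rigorous characterization of which flat basket codes $W_F$ (with prescribed band heights) produce \emph{only} non-shortcuttable arcs, and then showing that such codes always yield a boundary link with a split unknotted component. The combinatorics of cyclic double-occurrence words (chord diagrams) is delicate, and one must be careful to use the heights $\theta_i$, not just the abstract word — the same word at different height orderings can behave differently. I would try to isolate the degenerate configuration as "some band $B_i$ whose two feet are consecutive in $W_F$, i.e. $W_F$ contains $(\dots,i,i,\dots)$", since such a band bounds a disk meeting $U$ in an arc and splits off a Hopf-band-free trivial annulus; the careful part is proving that the absence of shortcuts \emph{forces} such a consecutive pair (or that allowing one shortcut still forces one), which I expect to do by an extremal/minimality argument on the cyclic word or by a direct Euler characteristic count on the disk $D_0$ cut along the band feet. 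I would present the picture-level verifications (the local model in Figure~\ref{fig:shortcut}, the independence of the two shortcut locales) by reference to figures as the paper does elsewhere, and concentrate the written argument on the combinatorial core.
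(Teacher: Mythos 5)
Your proposal does not actually prove the lemma: the step you yourself flag as the ``main obstacle'' (Steps 2--3, characterizing which codes admit at most one shortcut and showing they force a split unknot component) is exactly the content of the lemma, and it is left as an expectation rather than an argument. Moreover, the intermediate claim you propose for Step 2 is wrong as stated: a code of the form $(1,2,\dots,n,1,2,\dots,n)$ is not a configuration with no shortcuttable steps --- it has exactly two descents (the two passages from the highest label back to $1$), hence exactly two shortcuts, and its boundary is the torus link $\overline{T_{2,n-1}}$ (cf.\ Figure~\ref{fig:torus}), which has no split unknotted component. So ``only non-shortcuttable steps $\Rightarrow$ word of torus type $\Rightarrow$ split unknot'' fails at both implications; the degenerate form you actually need is the weakly increasing word $(1,1,2,2,\dots,n,n)$-type, i.e.\ a code containing a consecutive pair $(i,i)$, and you never establish that fewer than two shortcuts forces such a pair.

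The missing idea, which makes the lemma a two-line argument in the paper, is to look only at the two occurrences of the \emph{maximal} label $n$ in the cyclic code $W_F$. The hypothesis that $\partial F$ has no split unknotted component rules out the subcode $(n,n)$, so each of the two occurrences of $n$ is immediately followed by some letter $i_1,i_2<n$; since $\theta_n>\theta_{i_1},\theta_{i_2}$, each of these two places is of the shortcuttable type of Figure~\ref{fig:shortcut}, giving the two required shortcuts at once. Your descent-counting plan could in principle be repaired (a cyclic word with at most one strict descent must be weakly increasing cyclically, forcing consecutive pairs $(i,i)$ and hence a split unknot), but as written the proposal neither carries this out nor supplies the maximal-label shortcut, so the lemma is not established.
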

\begin{proof}
Let $W_{F}$ be the flat basket code of $F$. 
Because $\partial F$ has no split component which is isotopic to the unknot, $W_{F}$ has no subcode $(n, n)$. 
Since the cyclic word $W_{F}$ has exactly two $n$, there are two subcodes $(n, i_{1})$ and $(n, i_{2})$ of $W_{F}$ for some $i_{1}, i_{2}\in \{1,\dots, n-1\}$. 
At the corresponding places in $F$ to subcodes $(n, i_{1})$ and $(n, i_{2})$, we can take shortcuts for $\mathcal{L}_{F}$ since $n>i_1$ and $n>i_2$. 
\end{proof}
By Lemmas~\ref{lem:main}, \ref{lem:shortcut1} and \ref{lem:shortcut2}, we obtain the following. 
\begin{lem}\label{lem:main1}
Let $L$ be a non-trivial oriented link. 
Suppose that $L$ has no split component which is isotopic to the unknot. 
Then, we obtain 
\[
-\overline{tb}(L)+2\leq 2fpbk(L). 
\]
\end{lem}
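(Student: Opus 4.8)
The plan is to derive Lemma~\ref{lem:main1} by assembling the three preceding lemmas and then optimizing over all flat plumbing basket presentations of $L$. Since $L$ is non-trivial, any flat plumbing basket presentation $F$ of $L$ has $b_1(F)>0$; in fact $b_1(F)\geq 2$, because a flat plumbing basket with a single band has boundary an unknot (the single annulus is unknotted and untwisted). So we may always take $n=b_1(F)\geq 2$ and apply Lemma~\ref{lem:shortcut2}.

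First I would fix an arbitrary flat plumbing basket presentation $F$ of $L$ with $b_1(F)=n$. By Lemma~\ref{lem:main}, the Legendrian link $\mathcal{L}_F$ associated with $F$ satisfies $tb(\mathcal{L}_F)=-2n$. Next, since $L=\partial F$ has no split component isotopic to the unknot, Lemma~\ref{lem:shortcut2} guarantees that we may perform at least two shortcuts on $\mathcal{L}_F$; applying Lemma~\ref{lem:shortcut1} twice, we obtain a Legendrian link $\mathcal{L}_F''$ isotopic to $L$ with
\[
tb(\mathcal{L}_F'')=tb(\mathcal{L}_F)+2=-2n+2.
\]
By the definition of the maximal Thurston-Bennequin number, $\overline{tb}(L)\geq tb(\mathcal{L}_F'')=-2n+2$, hence $-\overline{tb}(L)\leq 2n-2=2b_1(F)-2$, i.e. $-\overline{tb}(L)+2\leq 2b_1(F)$.

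Finally I would take the minimum over all flat plumbing basket presentations $F$ of $L$: since the inequality $-\overline{tb}(L)+2\leq 2b_1(F)$ holds for every such $F$, and $fpbk(L)=\min_F b_1(F)$, we conclude $-\overline{tb}(L)+2\leq 2fpbk(L)$, which is exactly the statement. I do not expect a serious obstacle here: the argument is essentially a bookkeeping combination of Lemmas~\ref{lem:main}, \ref{lem:shortcut1} and \ref{lem:shortcut2}. The one subtlety worth spelling out carefully is why a flat plumbing basket presentation of a non-trivial link must have $b_1(F)\geq 2$ rather than merely $b_1(F)\geq 1$, so that Lemma~\ref{lem:shortcut2} genuinely applies; this follows because the boundary of a single unknotted, untwisted annulus plumbed onto a disk is an unknot, contradicting non-triviality. (Alternatively, one notes that $fpbk(L)\in 2\mathbf{Z}+|L|-1$, so for a knot $fpbk$ is even, and a non-trivial knot cannot have $fpbk=0$.)
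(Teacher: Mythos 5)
Your argument is correct and takes essentially the same route as the paper: Lemma~\ref{lem:main} gives $tb(\mathcal{L}_F)=-2b_1(F)$, Lemmas~\ref{lem:shortcut1} and \ref{lem:shortcut2} raise the Thurston--Bennequin number by $2$, and comparing with $\overline{tb}(L)$ and minimizing over presentations (the paper simply starts with a presentation realizing $fpbk(L)$) yields the inequality. One small correction to your side remark: a flat plumbing basket with a single band is an annulus whose boundary is a two-component unlink rather than an unknot, but such a boundary has a split unknot component and is therefore excluded by the hypothesis, so your conclusion that $b_1(F)\geq 2$ (needed to invoke Lemma~\ref{lem:shortcut2}, and left implicit in the paper) still holds.
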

\begin{proof}
Let $F$ be a flat plumbing basket presentation of $L$ with $b_1(F)=fpbk(L)$. 
By Lemma~\ref{lem:main}, we have $tb(\mathcal{L}_{F})=-2b_{1}(F)$. 
By Lemmas~\ref{lem:shortcut1} and \ref{lem:shortcut2}, we obtain $tb(\mathcal{L}_{F})\leq \overline{tb}(L)-2$. 
Hence, we have $-\overline{tb}(L)+2\leq 2b_{1}(F)$. This implies 
$
-\overline{tb}(L)+2\leq 2fpbk(L). 
$
\end{proof}
\begin{example}
Let $F$ be the flat plumbing basket depicted in Figure~\ref{fig:fpb1}, which presents the negative trefoil knot. 
The Legendrian link $\mathcal{L}_{F}$ associated with $F$ is as in Figure~\ref{fig:example}. 
Its Thurston-Bennequin number $tb(\mathcal{L}_{F})$ is $-8$. 
We can take $2$ shortcuts for $tb(\mathcal{L}_{F})$. 
Hence, $tb(\mathcal{L}_{F})+2\leq \overline{tb}(3_{1})$. 
It is known that $\overline{tb}(3_{1})=-6$. 
We see that the Legendrian link obtained by taking $2$ shortcuts for $\mathcal{L}_{F}$ attains the maximal Thurston-Bennequin number of $3_{1}$. 
Moreover, we obtain 
\[
8=-\overline{tb}(3_{1})+2\leq 2fpbk(3_{1})\leq 2\times 4=8. 
\]
Hence, we have $fpbk(3_{1})=4$. 
This coincides the result of Example~\ref{ex:trefoil1}. 
\end{example}
On the equality of Lemma~\ref{lem:main1}, we obtain the following. 
\begin{lem}\label{lem:main2}
Let $L$ be a non-trivial oriented link. 
Suppose that $L$ has no split component which is isotopic to the unknot. 
Then, $L$ satisfies  
\[
-\overline{tb}(L)+2=2fpbk(L) 
\]
if and only if $L$ is a negative alternating torus link $\overline{T_{2, n}}$ for some $n\geq 2$. 
\end{lem}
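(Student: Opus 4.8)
The plan is to prove the two implications separately. For the ``if'' direction, suppose $L=\overline{T_{2,n}}$ with $n\ge 2$. Since $fpbk$ is a mirror invariant and, by \cite[Theorem~2.4]{FHK} (cf. Corollary~\ref{cor:torus}), $T_{2,n}=T_{n,2}$ admits a flat plumbing basket presentation with $n+1$ bands, we have $fpbk(L)=n+1$ and so $2fpbk(L)=2n+2$. Lemma~\ref{lem:main1} already gives $\overline{tb}(L)\ge -2fpbk(L)+2=-2n$. For the reverse inequality I would invoke the Kauffman-polynomial bound on the maximal Thurston--Bennequin number, which is sharp on alternating links (Ng), to conclude $\overline{tb}(\overline{T_{2,n}})\le -2n$; equivalently, this is the value computed for these torus knots and links by Etnyre--Honda. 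Hence $\overline{tb}(L)=-2n$ and $-\overline{tb}(L)+2=2n+2=2fpbk(L)$.

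For the ``only if'' direction, suppose $-\overline{tb}(L)+2=2fpbk(L)=:2m$. Fix a flat plumbing basket presentation $F$ of $L$ with $b_{1}(F)=m$. We may assume $m\ge 2$: if $m=1$ then $W_{F}=(1,1)$ and $L=\partial F$ is the $2$-component unlink, which has split unknot components. By Lemma~\ref{lem:main}, $tb(\mathcal{L}_{F})=-2m$. The shortcut sites of $\mathcal{L}_{F}$ lie along pairwise disjoint arcs of $\mathcal{L}_{F}$, so performing $k$ of them simultaneously raises $tb$ by exactly $k$ by Lemma~\ref{lem:shortcut1}; hence $\overline{tb}(L)\ge tb(\mathcal{L}_{F})+k=-2m+k$. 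Combined with $\overline{tb}(L)=-2m+2$ this forces $k\le 2$, whereas Lemma~\ref{lem:shortcut2} (applicable since $b_{1}(F)=m\ge 2$ and $L$ has no split unknot component) gives $k\ge 2$. Thus $\mathcal{L}_{F}$ has exactly two shortcut sites; recalling that a shortcut site is exactly a subcode $(a,b)$ of $W_{F}$ with $a>b$, the flat basket code $W_{F}$ has precisely two descents.

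It then remains to run an elementary combinatorial step: a cyclic word on $\{1,\dots,m\}$ in which each letter occurs exactly twice, which contains no subcode $(i,i)$ (equivalent to $L$ having no split unknot component), and which has exactly two descents, must be $(1,2,\dots,m,1,2,\dots,m)$. Indeed, the two descents partition the word into two runs, each of which is \emph{strictly} increasing (a flat spot would be a subcode $(i,i)$) and hence uses each letter at most once; since together they contain each of $1,\dots,m$ twice, each run is exactly $(1,2,\dots,m)$. This word is the flat basket code of $\overline{T_{2,m-1}}$ (extending the negative-trefoil case of Figure~\ref{fig:fpb1}), and since $W_{F}$ determines $F$ up to isotopy we get $L=\partial F=\overline{T_{2,m-1}}$; as $L$ is non-trivial, $m\ge 3$ and $n:=m-1\ge 2$, so $L$ is a negative alternating torus link, as claimed. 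I expect the genuine obstacle to lie on the ``if'' side: Lemma~\ref{lem:main1} supplies only one of the two inequalities, so the exact value $\overline{tb}(\overline{T_{2,n}})=-2n$ must come from outside, via the Kauffman-polynomial bound for alternating links or the Etnyre--Honda classification. On the ``only if'' side the only delicate points — that disjoint shortcuts contribute additively to $tb$, and that shortcut sites correspond precisely to the descents of $W_{F}$ — are already implicit in the local models of Section~\ref{sec:thurston-bennequin}, after which the combinatorics is routine.
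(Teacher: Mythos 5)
Your proposal is correct and takes essentially the same route as the paper: the ``only if'' direction uses Lemmas~\ref{lem:main}, \ref{lem:shortcut1} and \ref{lem:shortcut2} to force exactly two shortcut sites and hence the code $(1,2,\dots,m,1,2,\dots,m)$ (the step the paper compresses into a reference to Figure~\ref{fig:torus}), while the ``if'' direction rests on the explicit $(n+1)$-band presentation together with the known value $\overline{tb}(\overline{T_{2,n}})=-2n$, which the paper likewise imports from Ng. Your explicit combinatorial classification of codes with two descents and the remark that disjoint shortcuts raise $tb$ additively simply spell out what the paper leaves implicit.
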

\begin{proof}
Suppose that $L$ satisfies $-\overline{tb}(L)+2=2fpbk(L)$. 
Let $F$ be a flat plumbing basket presentation of $L$ with $b_{1}(F)=fpbk(L)$. 
Then, by Lemma~\ref{lem:main}, we have $tb(\mathcal{L}_{F})=-2b_{1}(F)=-2fpbk(L)=\overline{tb}(L)-2$. 
By Lemmas~\ref{lem:shortcut1} and \ref{lem:shortcut2}, we can take exactly two shortcuts for $\mathcal{L}_{F}$. 
Such a flat plumbing basket is depicted in Figure~\ref{fig:torus} and its boundary is a negative alternating torus link $\overline{T_{2, n}}$ for some $n\geq 2$. 
\par
Conversely, suppose that $L=\overline{T_{2, n}}$ for some $n\geq 2$. 
It is known that $\overline{tb}(\overline{T_{2, n}})=-2n$ (for example, see \cite{Ng}). 
Moreover, by Figure~\ref{fig:torus}, $\overline{T_{2, n}}$ has a flat plumbing basket presentation $F$ with $b_{1}(F)=n+1$. 
Hence, we obtain $2n+2=-\overline{tb}(L)+2=2fpbk(L)\leq 2(n+1)$. This implies the equality. 
\end{proof}
\begin{figure}[h]
\begin{center}
\includegraphics[scale=0.6]{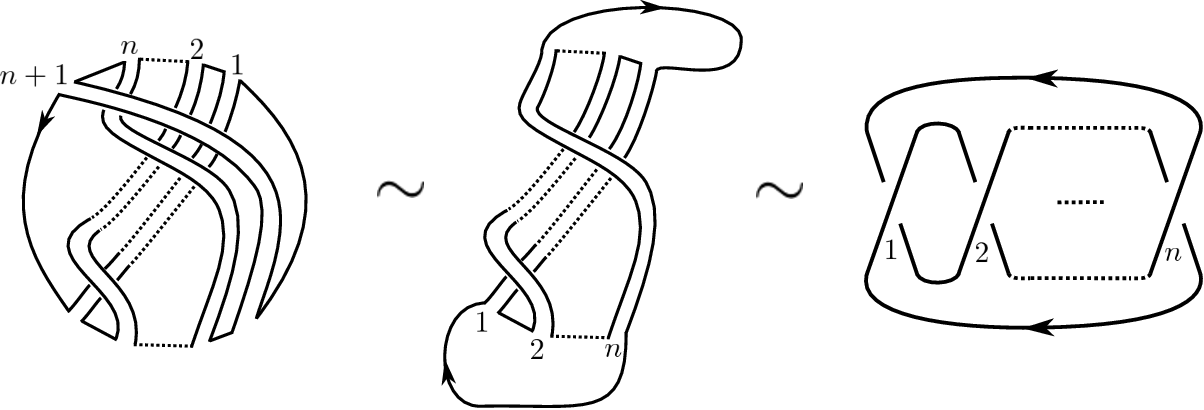}
\end{center}
\caption{A flat plumbing basket presentation with $n+1$ bands. The flat basket code is $(1,2,\cdots, n+1, 1,2,\cdots, n+1)$. We can take exactly two shortcuts for the associated Legendrian link. The boundary is isotopic to the negative torus link $\overline{T_{2,n}}$. }
\label{fig:torus}
\end{figure}
\begin{proof}[Proof of Theorem~\ref{thm:main}]
Note that $fpbk(L)=fpbk(\overline{L})$, where $\overline{L}$ is the mirror image. 
Hence, Theorem~\ref{thm:main} follows from Lemmas~\ref{lem:main1} and ~\ref{lem:main2}. 
\end{proof}
\begin{cor}\label{cor1}
Let $L$ be a non-trivial oriented link. 
Suppose that $L$ has no split component which is isotopic to the unknot. 
If $L$ is not an alternating torus link, we obtain 
\[
\max\{-\overline{tb}(L), -\overline{tb}(\overline{L})\}+3\leq 2fpbk(L).
\]
\end{cor}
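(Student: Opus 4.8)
The plan is to combine Theorem~\ref{thm:main} (proved just above) with a mild improvement of the case analysis in Lemma~\ref{lem:main2}. By Lemma~\ref{lem:main1} applied to both $L$ and $\overline{L}$, together with $fpbk(L)=fpbk(\overline{L})$, we already have $\max\{-\overline{tb}(L),-\overline{tb}(\overline{L})\}+2\le 2fpbk(L)$. Since all quantities involved are integers, to upgrade the ``$+2$'' to ``$+3$'' it suffices to rule out equality in the weaker bound, i.e.\ to show that if $\max\{-\overline{tb}(L),-\overline{tb}(\overline{L})\}+2=2fpbk(L)$ then $L$ is an alternating torus link. This is precisely the content of Theorem~\ref{thm:main}: the equality there holds if and only if $L$ is a non-trivial alternating torus link $T_{2,n}$ for some $|n|\ge 2$. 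Hence, contrapositively, if $L$ is not an alternating torus link, the equality fails, and integrality forces the strict improvement $\max\{-\overline{tb}(L),-\overline{tb}(\overline{L})\}+3\le 2fpbk(L)$.

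Concretely, the steps I would carry out are as follows. First, invoke Theorem~\ref{thm:main}: for a non-trivial oriented link $L$ with no unknotted split component, $\max\{-\overline{tb}(L),-\overline{tb}(\overline{L})\}+2\le 2fpbk(L)$, with equality exactly when $L$ is an alternating torus link $T_{2,n}$, $|n|\ge2$. Second, observe that $-\overline{tb}(L)$, $-\overline{tb}(\overline{L})$ and $fpbk(L)$ are all integers, so the failure of equality in an ``$\le$'' between integers is equivalent to a gap of at least $1$. Third, assuming $L$ is not an alternating torus link, conclude from the ``only if'' direction of Theorem~\ref{thm:main} that $\max\{-\overline{tb}(L),-\overline{tb}(\overline{L})\}+2<2fpbk(L)$, hence $\max\{-\overline{tb}(L),-\overline{tb}(\overline{L})\}+3\le 2fpbk(L)$, which is the claim.

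There is essentially no obstacle here beyond bookkeeping: the corollary is a one-line deduction from Theorem~\ref{thm:main} plus the parity/integrality remark that $fpbk(L)\in 2\mathbf{Z}+|L|-1$ (recorded in Section~\ref{sec:preliminary}) and that $\overline{tb}$ is integer-valued. The only point requiring a sentence of care is making sure the hypothesis of Theorem~\ref{thm:main} is met — but that is verbatim the hypothesis of the corollary (non-trivial, no unknotted split component) — and that ``not an alternating torus link'' indeed negates the equality case ``$L\cong T_{2,n}$ for some $|n|\ge2$'': an alternating torus link is precisely $T_{2,n}$ for $|n|\ge 2$ (the only alternating torus links), so the two phrasings of the exceptional family agree. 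If one wanted to be maximally explicit one could also note that since $\overline{tb}(L)$ and $\overline{tb}(\overline{L})$ themselves have fixed parity mod~$2$ tied to $fpbk(L)$ via Lemmas~\ref{lem:main} and~\ref{lem:rotation}, the jump from $+2$ to $+3$ does not overshoot any parity constraint; but this refinement is not needed for the stated inequality.

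\begin{proof}
Since $L$ is non-trivial and has no split component isotopic to the unknot, Theorem~\ref{thm:main} applies and gives
\[
\max\{-\overline{tb}(L), -\overline{tb}(\overline{L})\}+2\leq 2fpbk(L),
\]
with equality if and only if $L$ is a non-trivial alternating torus link $T_{2,n}$ for some $|n|\geq 2$, i.e.\ an alternating torus link. By hypothesis $L$ is not an alternating torus link, so the equality cannot hold, hence
\[
\max\{-\overline{tb}(L), -\overline{tb}(\overline{L})\}+2< 2fpbk(L).
\]
All three of $\overline{tb}(L)$, $\overline{tb}(\overline{L})$ and $fpbk(L)$ are integers, so a strict inequality between integers improves by at least $1$, giving
\[
\max\{-\overline{tb}(L), -\overline{tb}(\overline{L})\}+3\leq 2fpbk(L),
\]
as claimed.
\end{proof}
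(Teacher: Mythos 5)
Your proof is correct and is exactly the intended deduction: the paper states Corollary~\ref{cor1} as an immediate consequence of Theorem~\ref{thm:main}, using the equality characterization to exclude the value $+2$ and integrality to upgrade to $+3$. No issues.
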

\begin{question}
When does the equality of Corollary~\ref{cor1} hold?
Classify such links. 
\end{question}
For example, by Table~\ref{table1}, $8_{21}$ satisfies 
\[
\max\{-\overline{tb}(8_{21}), -\overline{tb}(\overline{8_{21}})\}+3= 2fpbk(8_{21}).
\]
%
\section{Further discussion}\label{sec:discussion}
\subsection{Negativity of links and flat plumbing basket number}
It is known that the maximal self-linking number $\overline{sl}(L)$ and the maximal Thurston-Bennequin number $\overline{tb}(L)$ of a positive link $L$ hold the equality of the Bennequin inequality \cite{Bennequin}, that is, $\overline{sl}(L)=\overline{tb}(L)=-\chi(L)$ (for example, see \cite{T-Tanaka}).  
In this sense, $\overline{sl}$ and $\overline{tb}$ of a positive link are large. 
On the other hand, as corollaries of MFW inequality and the Rasmussen bound on the maximal Thurston-Bennequin number \cite{Plamenevskaya, Shumakovitch}, we obtain 
\begin{align*}
\overline{sl}(L)+\overline{sl}(\overline{L})&\leq -\operatorname{breadth}_{v}P_{L}(v,z)-2, \\
\overline{tb}(L)+\overline{tb}(\overline{L})&\leq -2. 
\end{align*}
Hence, we see that the maximal self-linking number and the maximal Thurston-Bennequin number of a negative link are small. 
By this observation, it seems that Theorem~\ref{thm:self-linking} (or Corollary~\ref{cor:self-linking-homfly}) and Theorem~\ref{thm:main} (or Lemma~\ref{lem:main1}) are effective for negative links. 
In particular, in Table~\ref{table1}, we see that $-\overline{sl}(L)-1=fpbk(L)$ for negative links $L$ with up to $9$ crossings. 
\begin{question}
For any negative link $L$, does the following hold?
\[
-\overline{sl}(L)-1= fpbk(L). 
\]
\end{question}
\subsection{Non-sharpness of Theorem~\ref{thm:main}} 
For any $M>0$, there is a link $L$ such that 
\[
2fpbk(L)-(\max\{-\overline{tb}(L), -\overline{tb}(\overline{L})\}+2)>M. 
\]
In fact, we can construct such a link as follows. 
Let $K=8_{9}$. Then, $g(8_9)=3$ and $\max\{-\overline{tb}(8_9), -\overline{tb}(\overline{8_9})\}=5$. 
Let $K_n$ be the connected sum of $n$ copies of $8_9$, where we take the mirror image so that $\overline{tb}(8_9)=-5$. 
It is known that $\overline{tb}(K\sharp K')=\overline{tb}(K)+\overline{tb}(K')+1$ for any knots $K$ and $K'$ \cite{Torisu}. 
Hence, we obtain 
\begin{align*}
g(K_n)&=3n, \\
\overline{tb}(K_n)&=n\overline{tb}(8_9)+(n-1)=-5n+n-1=-4n-1. 
\end{align*}
By Theorem~\ref{thm:Hirose-Nakashima}, we see that $6n+2\leq fpbk(K_{n})$. 
Hence, we have 
\[
2fpbk(L)-(\max\{-\overline{tb}(L), -\overline{tb}(\overline{L})\}+2)\geq 2(6n+2)-(4n+1+2)=8n+1. 
\]
\subsection{Front and Lagrangian projections}
Let $F$ be a flat plumbing basket. 
If we can draw a front (or Lagrangian) projection of $\mathcal{L}_{F}$, we may use front projections of Legendrian links to study flat plumbing baskets.  
\begin{question}\label{ques:front}
Find a method to draw a front projection of the Legendrian link $\mathcal{L}_{F}$ associated with a flat plumbing basket $F$.  
\end{question}
An answer to Question~\ref{ques:front} is given in \cite{Ito-Tagami}. 
\subsection{Non-trivial open book decomposition and flat plumbing basket}
In this paper, we only consider the trivial open book decomposition $\mathcal{O}$ of $\mathbf{S}^3$. 
However, we can define flat plumbing baskets $F$ and the Legendrian knot $\mathcal{L}_{F}$ associated with $F$ in non-trivial open book decompositions of $\mathbf{S}^3$ naturally.  
\begin{question}
Consider non-trivial open book decompositions of $\mathbf{S}^3$ (or general $3$-manifold $M$), and give analogies of Theorems~\ref{thm:self-linking} and \ref{thm:main}. 

\end{question}
\subsection{Tabulation}\label{sec:tablation}
We improve \cite[Table~1]{Hirose-Nakashima} as in Table~\ref{table1}, where we use Theorems~\ref{thm:Hirose-Nakashima}, \ref{thm:self-linking} and \ref{thm:main}. 
In Table~\ref{table1}, $-\overline{tb}(K)$ means $\max\{-\overline{tb}(K), -\overline{tb}(\overline{K})\}$ and $-\overline{sl}(K)$ means $\max\{-\overline{sl}(K), -\overline{sl}(\overline{K})\}$. 
We refer to \cite[Proposition~1.6]{Ng2} for $\overline{sl}(K)$ and \cite{knot_info} for $\overline{tb}(K)$. 
Note that $fpbk(K) \in 2\mathbf{Z}$ for a knot $K$. 
\par 
In this table, the asterisks $^{\ast}$ are improved points of \cite[Table~1]{Hirose-Nakashima}.
The double asterisks $^{\ast\ast}$ are given by \cite{CCK} and Mikami Hirasawa. 
In fact, Mikami Hirasawa taught the author that a flat basket code for $8_1$ is $(1,2,4,5,3,6,1,4,6,2,5,3)$ and a flat basket code for $9_{44}$ is $(1,2,5,6,1,4,3,5,6,2,4,3)$. 
The daggers $^{\dagger}$ mean that Theorem~\ref{thm:self-linking} or ~\ref{thm:main} detects $fpbk(K)$. 
\par
For example, $8_{15}$ has a flat plumbing basket presentation $F$ with $b_{1}(F)=10$. 
On the other hand, it is known that $\max\{-\overline{sl}(8_{15}), -\overline{sl}(\overline{8_{15}})\}=11$ (see \cite[Proposition~1.6]{Ng2}). 
Hence, by Theorem~\ref{thm:self-linking}, we have $ fpbk(8_{15})= 10$. 
\par 
For another example, the knot $9_{45}$ has a flat plumbing basket presentation $F$ with $b_{1}(F)=8$. 
On the other hand, it is known that $\max\{-\overline{tb}(9_{45}), -\overline{tb}(\overline{9_{45}})\}=10$ (for example, see \cite{knot_info}). 
Moreover, $9_{45}$ is a non-torus knot. Hence, by Theorem~\ref{thm:main}, we have $10+3=13\leq 2fpbk(9_{45})\leq  16$. 
Since $fpbk(9_{45})\in 2\mathbf{Z}$, we have $fpbk(9_{45})=8$. 
\begin{question}
Determine $fpbk(K)$ for $K=9_{25}, 9_{34}, 9_{39}, 9_{40}, 9_{41}$ and $9_{43}$. 
\end{question}
%
%
%
Note that $fpbk$ is subadditive under connected sum of knots. 
However, in general, it is not additive. 
For example, Hirose-Nakashima \cite[Remark~1.4(b)]{Hirose-Nakashima} proved that $fpbk(3_1)=fpbk(\overline{3_1})=4$ but $fpbk(3_1\sharp \overline{3_1})=6$. 
Nao Kobayashi (Imoto) proved that $fpbk(6_1)=fpbk(\overline{6_1})=6$ but $fpbk(6_1\sharp \overline{6_1})=8$ in \cite[Proposition~5.4]{imoto-thesis}.
%
%

%
\begin{table}[hp]
\begin{tabular}{|c|c|c|c||c|c|c|c|}
\hline
$K$&$-\overline{tb}(K)$& $-\overline{sl}(K)$&$fpbk(K)$&$K$&$-\overline{tb}(K)$& $-\overline{sl}(K)$&$fpbk(K)$ \\ \hline\hline
$3_1$&6$^{\dagger}$&5$^{\dagger}$&4& $9_8$&8&7&8 \\ \hline
$4_1$&3&3&4&  $9_9$&16$^{\dagger}$&11$^{\dagger}$&10 \\ \hline
$5_1$&10$^{\dagger}$&7$^{\dagger}$&6&  $9_{10}$&14$^{\dagger}$&11$^{\dagger}$&$10^{\ast}$\\ \hline
$5_2$&8$^{\dagger}$&7$^{\dagger}$&6&  $9_{11}$&12$^{\dagger}$&9$^{\dagger}$&8 \\ \hline
$6_1$&5&5&6&  $9_{12}$&10$^{\dagger}$&9$^{\dagger}$&8 \\ \hline
$6_2$&7$^{\dagger}$&5&6&  $9_{13}$&14$^{\dagger}$&11$^{\dagger}$&$10^{\ast}$\\ \hline
$6_3$&4&3&6&  $9_{14}$&7&7&8 \\ \hline
$7_1$&14$^{\dagger}$&9$^{\dagger}$&8&  $9_{15}$&10$^{\dagger}$&9$^{\dagger}$&8 \\ \hline
$7_2$&10$^{\dagger}$&9$^{\dagger}$&$8^{\ast}$&  $9_{16}$&16$^{\dagger}$&11$^{\dagger}$&10 \\ \hline
$7_3$&12$^{\dagger}$&9$^{\dagger}$&8&  $9_{17}$&8&5&8 \\ \hline
$7_4$&10$^{\dagger}$&9$^{\dagger}$&$8^{\ast}$&  $9_{18}$&14$^{\dagger}$&11$^{\dagger}$&$10^{\ast}$ \\ \hline
$7_5$&12$^{\dagger}$&9$^{\dagger}$&8&  $9_{19}$&6&5&8 \\ \hline
$7_6$&8$^{\dagger}$&7$^{\dagger}$&6&  $9_{20}$&12$^{\dagger}$&9$^{\dagger}$&8 \\ \hline
$7_7$&4&5&6&  $9_{21}$&10$^{\dagger}$&9$^{\dagger}$& 8\\ \hline
$8_1$&7$^{\dagger}$&7$^{\dagger}$&$6^{\ast\ast}$&  $9_{22}$&8&5&8 \\ \hline
$8_2$&11$^{\dagger}$&7&8&  $9_{23}$&14$^{\dagger}$&11$^{\dagger}$&$10^{\ast}$\\ \hline
$8_3$&5&5&6&  $9_{24}$&6&5& 8\\ \hline
$8_4$&7&5&8&  $9_{25}$&10&9& 8--10\\ \hline
$8_5$&11$^{\dagger}$&7&8&  $9_{26}$&9&7& 8\\ \hline
$8_6$&9&7&8&  $9_{27}$&6&5& 8\\ \hline
$8_7$&8&5&8&  $9_{28}$&9&7& 8\\ \hline
$8_8$&6&5&8&  $9_{29}$&8&5& 8\\ \hline
$8_9$&5&3&8&  $9_{30}$&6&5& 8\\ \hline
$8_{10}$&8&5&8&  $9_{31}$&9&7&8 \\ \hline
$8_{11}$&9&7&8&  $9_{32}$&9&7&8 \\ \hline
$8_{12}$&5&5&6&  $9_{33}$&6&5&8 \\ \hline
$8_{13}$&6&5&8&  $9_{34}$&6&5&8--12 \\ \hline
$8_{14}$&9&7&8&  $9_{35}$&12&11$^{\dagger}$ &$10^{\ast}$\\ \hline
$8_{15}$&13&11$^{\dagger}$&$10^{\ast}$&  $9_{36}$&12$^{\dagger}$&9$^{\dagger}$& 8\\ \hline
$8_{16}$&8&5&8&  $9_{37}$&6&5& 8\\ \hline
$8_{17}$&5&3&8&  $9_{38}$&14$^{\dagger}$&11$^{\dagger}$&$10^{\ast}$\\ \hline
$8_{18}$&5&3&8&  $9_{39}$&10&9& 8--10\\ \hline
$8_{19}$&12&11$^{\dagger}$&$10^{\ast}$&  $9_{40}$&9&7&8--12 \\ \hline
$8_{20}$&6$^{\dagger}$&5&6&  $9_{41}$&7&7& 8--10\\ \hline
$8_{21}$&9$^{\dagger}$&7$^{\dagger}$&6&  $9_{42}$&5&5& 6\\ \hline
$9_1$&18$^{\dagger}$&11$^{\dagger}$&10&  $9_{43}$&10&9& 8--10\\ \hline
$9_2$&12&11$^{\dagger}$&$10^{\ast}$&  $9_{44}$&6$^{\dagger}$&5&$6^{\ast\ast}$ \\ \hline
$9_3$&16$^{\dagger}$&11$^{\dagger}$&10&  $9_{45}$&10$^{\dagger}$&9$^{\dagger}$&$8^{\ast}$\\ \hline
$9_4$&14$^{\dagger}$&11$^{\dagger}$&$10^{\ast}$&  $9_{46}$&7$^{\dagger}$&7$^{\dagger}$& 6\\ \hline
$9_5$&12&11$^{\dagger}$&$10^{\ast}$&  $9_{47}$&7&7&8 \\ \hline
$9_6$&16$^{\dagger}$&11$^{\dagger}$&10&  $9_{48}$&8$^{\dagger}$&7$^{\dagger}$& 6\\ \hline
$9_7$&14$^{\dagger}$&11$^{\dagger}$&$10^{\ast}$&  $9_{49}$&12&11$^{\dagger}$&$10^{\ast}$ \\ \hline
\end{tabular}
\caption{
Table of flat plumbing basket numbers $fpbk(K)$ for prime knots $K$ with up to $9$ crossings.  
For the notations, see Section~\ref{sec:tablation}. } 
\label{table1}
\end{table}
\par
\ 
\par
\noindent{\bf Acknowledgements: }
The author would like to thank the members of Knotting Nagoya in Nagoya Institute of Technology in June 24--25, 2017. 
In particular, the author wishes to express his gratitude to Mikami Hirasawa and Susumu Hirose for many helpful comments, data of flat plumbing baskets and their encouragements.  
The author was supported by JSPS KAKENHI Grant number 16H07230. 
%
\newpage

%
\bibliographystyle{amsplain}
\bibliography{mrabbrev,tagami}
\end{document}